\documentclass[twocolumn]{autart}
\makeatletter
\def\ps@copyright{%
  \let\@mkboth\@gobbletwo
  \def\@oddhead{}%
  \let\@evenhead\@oddhead
  \def\@oddfoot{\hfil{\rmfamily\thepage}\hfil}%
  \let\@evenfoot\@oddfoot
}
\makeatother

\usepackage{amsmath}
\usepackage{amssymb}
\usepackage{enumitem}
\usepackage{array}
\usepackage[colorlinks = true, linkcolor = blue, citecolor = blue, urlcolor = blue]{hyperref}
\usepackage[nameinlink]{cleveref}
\usepackage{xcolor}
\usepackage{graphicx}

\newtheorem{algorithmo}{Algorithm}
\newtheorem{condition}{Condition}
\newtheorem{corollary}{Corollary}
\newtheorem{definition}{Definition}
\newtheorem{problem}{Problem}
\newtheorem{proposition}{Proposition}
\newtheorem{remark}{Remark}
\newtheorem{theorem}{Theorem}

\crefname{equation}{}{}
\crefname{condition}{Condition}{Conditions}

\begin{document}

\begin{frontmatter}

\title{Global exponential stabilization of a force- and \\ torque-actuated unicycle by flexible-step MPC\thanksref{footnoteinfo}}
\thanks[footnoteinfo]{This paper was not presented at any IFAC meeting. Corresponding author: Ala Kolsi}
\author[CE]{Ala Kolsi}\ead{ala.kolsi@ic.rwth-aachen.de},
\author[CE]{Christian Ebenbauer}\ead{christian.ebenbauer@ic.rwth-aachen.de},
\author[BG]{Bahman Gharesifard}\ead{bahman.gharesifard@queensu.ca},
\author[CE]{Raik Suttner}\ead{raik.suttner@ic.rwth-aachen.de}

\address[CE]{Chair of Intelligent Control Systems, RWTH Aachen University, Aachen, Germany}
\address[BG]{Department of Mathematics and Statistics at Queen's University, Kingston, Canada}

\begin{abstract}
We study the problem of global exponential stabilization of a force- and torque-controlled unicycle model in discrete time. To this end, we extend a recently introduced approach to model predictive control (MPC) in which a flexible number of inputs is implemented in every iteration. We present the first flexible-step MPC protocol with state-dependent weights for average descent. Notably, the proposed method relies neither on a suitable design of running or terminal cost functions nor on a suitable choice of terminal constraints. Instead, stability is guaranteed through a generalized discrete-time control Lyapunov function. We establish a new theoretical framework for global exponential stabilization of general nonlinear discrete-time control systems by flexible-step MPC. The obtained results go beyond the unicycle example. However, given the importance of the unicycle dynamics, we make that a focal point of our work. For the particular case of the dynamic (second-order) unicycle model, we show that global exponential stability cannot be attained in the classical sense, but in a slightly weaker sense. The proposed flexible-step MPC method is shown to induce the best possible notion of global exponential stability for this model. We provide explicit rules for the choice of parameters, which guarantee feasibility and global exponential stability. Our numerical simulations show that the discrete MPC method also works very well in applications to a continuous-time torque-actuated unicycle.
\end{abstract}

\begin{keyword}
Model predictive control, Lyapunov methods, Exponential stabilization, Nonholonomic systems.
\end{keyword}

\end{frontmatter}

%----------------------------------------------------------------------
%
%                               Section 1
%
%----------------------------------------------------------------------

\section{Introduction}\label{sec:1}
Wheeled mobile robots are widely used for industrial and service applications due to their mechanical simplicity and versatility. The unicycle model is a standard benchmark example, which serves as a simplified model for the motion of a mobile robot. Many studies assume that the longitudinal and rotational velocity can be set instantaneously through the inputs. However, when inertial effects cannot be neglected, a force- and torque-actuated model may provide a more realistic description. In the present paper, we consider a discrete-time dynamic unicycle model in which the velocities do not serve as inputs but are part of the system state.

A frequently studied control objective in this context is asymptotic stabilization about a prescribed position and orientation. In case of nonholonomic velocity constraints, this task can typically not be accomplished by continuous static state feedback. To overcome this obstruction, many different methods and ideas have been proposed, including discontinuous static state feedback \cite{Astolfi1996}, 
\cite{Bloch1996}, \cite{Mcloskey1997}, and time-varying continuous state feedback \cite{Samson1993}, \cite{Pomet1992}, 
\cite{Sordalen1995} for the unicycle and related nonholonomic systems.

In many applications, the control objective is not limited to mere asymptotic stabilization, but it also involves satisfaction of constraints and optimization of performance. To meet all of these requirements, model predictive control (MPC) provides a general and well-established approach \cite{GrueneBook}, \cite{Schwenzer2021}. Several MPC-based results on set-point stabilization of unicycles have been reported so far, e.g.~in \cite{Xie2008}, \cite{Zhang2021}, \cite{VanEssen2001}. Notably, the MPC method in \cite{Worthmann2016} provides local asymptotic stability for a kinematic unicycle through a suitable choice of the running cost and the prediction horizon, but without the need for stabilizing terminal costs or constraints. A more general study on MPC without stabilizing terminal costs or constraints, which is based on homogeneous approximations, can be found in \cite{Coron2020}. For the unicycle model, which is non-homogeneous, such a homogeneous approximation can typically only lead to local or semi-global stabilization. Building on the results in \cite{Worthmann2016} and \cite{Coron2020}, semi-global asymptotic stability of a partially dynamic unicycle model has been shown in \cite{Russwurm2021}, where the domain of attraction grows with increasing prediction horizon. In the present paper, we aim for global exponential stability.

In most of the known discrete-time MPC schemes, only the first element of a predicted optimal input sequence is applied to a control system. This procedure typically yields \emph{static state feedback} in the sense that the next applied input depends solely on the current system state. In the present paper, we use an MPC scheme with a temporal component, which generates \emph{time-varying state feedback}. Such an approach is well-suited for applications to control systems that cannot be asymptotically stabilized by smooth static state feedback, such as nonholonomic systems. The temporal component of the employed MPC schemes originates from a flexible number of implemented steps for each predicted optimal input sequence. Here, we build on the recently introduced flexible-step MPC approach from \cite{Furnsinn20251}. The distinct feature of this approach relative to other MPC methods with a flexible number of implemented inputs, see \cite{Yang1993} \cite{Alamir2017} \cite{Worthmann2017}, is that the tasks of optimization and stabilization are separated. This means that a suitable design of cost functions or terminal constraints for stabilization, which is often difficult to obtain, is not needed. Instead, stability of a system is guaranteed through a so-called \emph{generalized control Lyapunov function} in the underlying optimal control problem. Relaxed feasibility and stability criteria for flexible-step MPC are presented in \cite{Furnsinn2023}. Also applications to known switched linear systems \cite{Furnsinn20252} and unknown linear systems \cite{Pietschner2026} have been investigated so far. In the present paper, we investigate exponential stabilization of nonlinear systems by flexible-step MPC with a focus on the dynamic (second-order) unicycle model.

\emph{Main contributions}. We extend the theory of flexible-step MPC from \cite{Furnsinn20251} to a general framework for global exponential stabilization of nonlinear discrete-time systems. We relax several components of flexible-step MPC so that very simple generalized control Lyapunov functions can be used for stabilization. For the example of a dynamic unicycle, we show that choosing the state norm as this function is sufficient for global exponential stability. In this context, exponential stability in the classical sense will be shown to be not attainable for the dynamic unicycle model. However, one can achieve this by relaxing the exponent on the norm of the initial state. Remarkably, it turns out that the proposed flexible-step MPC protocol achieves the best possible exponent. To the best of our knowledge, this is also the first result on global exponential stabilization of a force- and torque-actuated unicycle by model predictive control.

The paper is organized as follows. Basic notation and definitions are summarized in \Cref{sec:2}. The problem of exponential stabilization of a dynamic unicycle model and its challenges are described in \Cref{sec:3}. An approach to global exponential stabilization of general nonlinear discrete-time systems by flexible-step MPC and a general stability result are presented in \Cref{sec:4}. The general theory is then applied to the unicycle model in \Cref{sec:5}. Finally, numerical tests can be found in \Cref{sec:6}, where the proposed discrete-time MPC scheme is applied to a continuous-time force- and torque-actuated unicycle.

%----------------------------------------------------------------------
%
%                               Section 2
%
%----------------------------------------------------------------------

\section{Notation}\label{sec:2}
Let $\mathbb{N}$ denote the set of nonnegative integers. For $s,t\in\mathbb{N}$ with $s\leq{t}$, the integer intervals $[s\!:\!t]$, $[s\!:\!t)$, $(s\!:\!t]$, and $(s\!:\!t)$ are given by the intersection of $\mathbb{N}$ with the usual real number intervals $[s,t]$, $[s,t)$, $(s,t]$, and $(s,t)$, respectively. Let $\mathcal{K}$ denote the set of continuous and strictly increasing functions $\gamma\colon[0,\infty)\to[0,\infty)$ with $\gamma(0)=0$. Let $\mathcal{K}_\infty$ be the set of all elements $\gamma$ of $\mathcal{K}$ with $\gamma(r)\to\infty$ as $r\to\infty$. Let $\mathcal{KL}$ be the set of continuous functions $\beta\colon[0,\infty)^2\to[0,\infty)$ such that $\beta(\cdot,t)$ is an element of $\mathcal{K}$ for each $t\geq0$ and such that $\beta(r,\cdot)$ is non-increasing with $\beta(r,t)\to0$ as $t\to\infty$ for each $r\geq0$. For implemented states and inputs of a control system at time $t\in\mathbb{N}$, we use the bracket notation $x(t)$ and $u(t)$. For predicted states and inputs of an optimal control problem at time $i\in\mathbb{N}$ within the prediction horizon, we use the subscript notation $x_i$ and $u_i$.

%----------------------------------------------------------------------
%
%                               Section 3
%
%----------------------------------------------------------------------

\section{Problem statement and challenges}\label{sec:3}
We consider a dynamic (force- and torque-controlled) unicycle model with mass $m>0$, inertia $J>0$, and linear damping constants $k,\kappa>0$. In continuous time, the second-order dynamic unicycle model reads%
\begin{align}\label{eq:unicycleModelContinuousTime}%
& \dot{\mathrm{x}} \ = \ v\,\cos(\theta), \qquad \dot{\mathrm{y}} \ = \ v\,\sin(\theta), \qquad \dot{\theta} \ = \ \omega, \nonumber \\
& m\,\dot{v} \ = \ F-k\,v, \qquad  J\,\dot{\omega} \ = \ \mathcal{T}-\kappa\,\omega,
\end{align}%
see~e.g.~\cite{Gazi2007}. The Euler discretization of the continuous-time system \cref{eq:unicycleModelContinuousTime} with step size $h>0$ is given by%
\begin{subequations}\label{eq:unicycleModel}%
\begin{align}
\mathrm{x}(t+1) & \ = \ \mathrm{x}(t) + h\,v(t)\,\cos(\theta(t)), \label{eq:unicycleModel:a} \allowdisplaybreaks \\
\mathrm{y}(t+1) & \ = \ \mathrm{y}(t) + h\,v(t)\,\sin(\theta(t)), \label{eq:unicycleModel:b} \allowdisplaybreaks \\
\theta(t+1) & \ = \ \theta(t) + h\,\omega(t), \label{eq:unicycleModel:c} \allowdisplaybreaks \\
v(t+1) & \ = \ v(t) + \tfrac{h}{m}\,(F(t)-k\,v(t)), \label{eq:unicycleModel:d} \allowdisplaybreaks \\
\omega(t+1) & \ = \ \omega(t) + \tfrac{h}{J}\,(\mathcal{T}(t)-\kappa\,\omega(t)). \label{eq:unicycleModel:e}
\end{align}%
\end{subequations}%
The five-dimensional system state of \cref{eq:unicycleModel} consists of the position components $\mathrm{x}(t),\mathrm{y}(t)$, an angle $\theta(t)$ for the alignment, the linear velocity $v(t)$, and the angular velocity $\omega(t)$. The two input components of \cref{eq:unicycleModel} are a force $F(t)$ and a torque $\mathcal{T}(t)$. We naturally use the notation%
\begin{subequations}\label{eq:unicycleNotation}%
\begin{align}
x(t) & \ = \ \big[ \ \mathrm{x}(t),\ \mathrm{y}(t),\ \theta(t),\ v(t),\ \omega(t) \ \big]^\top \in \mathbb{R}^5, \\
u(t) & \ = \ \big[ \ F(t),\ \mathcal{T}(t) \ \big]^\top \in \mathbb{R}^2
\end{align}%
\end{subequations}%
for the state and the input vector of \cref{eq:unicycleModel}, respectively. The control objective is to globally exponentially stabilize the unicycle model \cref{eq:unicycleModel} with respect to the origin.

The nonlinear discrete-time control system \cref{eq:unicycleModel} does not satisfy Brockett's necessary condition for local asymptotic stabilization from \cite{Sundarapandian2002}. Hence, there exists no continuously differentiable static state feedback that locally asymptotically stabilizes \cref{eq:unicycleModel} around the origin. However, system \cref{eq:unicycleModel} can be asymptotically stabilized by time-varying state feedback. The proposed flexible-step MPC approach in this paper may be seen as such a time-varying method. It differs from the standard one-step implementation of MPC, which typically generates static state feedback. A flexible number of implemented steps for each predicted optimal input sequence represents an additional temporal component, which is beneficial for asymptotic stabilization. 
The proposed general flexible-step MPC approach in the subsequent \Cref{sec:4} is not limited to the unicycle model \cref{eq:unicycleModel} but also applicable to discretizations of other nonholonomic systems.

In the present paper, we do not limit our objectives to local asymptotic stabilization of \cref{eq:unicycleModel}. Instead, we aim to achieve global exponential stabilization. However, this goal comes with an additional challenge. Namely, the question whether it is possible to globally exponentially stabilize \cref{eq:unicycleModel} by any kind of control strategy. As we will see below, the answer is negative if we want to achieve the classical notion of exponential stability. The subsequent ``negative result,'' \Cref{prop:noGES}, makes a stronger statement: Even a weaker stability property than local exponential stability cannot be achieved for \cref{eq:unicycleModel}. There is no control strategy for the unicycle model \cref{eq:unicycleModel} such that, locally around the origin, every solution $x\colon\mathbb{N}\to\mathbb{R}^5$ of the closed-loop system satisfies an exponential stability condition of the form%
\begin{equation}\label{eq:MichelBook3}
\|x(t)\| \ \leq \ \|x(0)\|^q\,\lambda\,\mathrm{e}^{-\mu\,t},
\end{equation}%
where $\lambda,\mu,q$ are positive real numbers with $1/2<q\leq1$ and $\|\cdot\|$ denotes the Euclidean norm on $\mathbb{R}^5$, we state this formally next.%
\begin{proposition}\label{prop:noGES}
Let $r$, $q$, $\lambda$, and $\mu$ be positive real numbers with $1/2<q\leq1$. Then there exists $\varepsilon\in(0,r)$ such that, for every $u\colon\mathbb{N}\to\mathbb{R}^2$, there exists $\tau\in\mathbb{N}$ such that the solution $x\colon\mathbb{N}\to\mathbb{R}^5$ of \cref{eq:unicycleModel} with input $u$ and initial condition%
\begin{equation}\label{eq:noGES}
x(0) \ = \ \big[ \ 0, \ \varepsilon, \ 0, \ 0, \ 0 \ \big]^\top
\end{equation}%
violates inequality \cref{eq:MichelBook3} at time $t=\tau$.
\end{proposition}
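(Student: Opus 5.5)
The plan is a contradiction argument. I suppose that, for some input $u$, inequality \cref{eq:MichelBook3} holds at every time $t\in[0\!:\!\tau]$, and I derive an impossible estimate on how far the $\mathrm{y}$-coordinate can travel. The geometric intuition is that $\mathrm{y}$ can only decrease through the product $v\sin(\theta)$. Both factors are bounded by the state norm, so their product is quadratically small, $O(\|x\|^2)$, while $\mathrm{y}$ has to travel a distance of order $\varepsilon$. Because $2q>1$, the quadratic bound $\varepsilon^{2q}$ is much smaller than $\varepsilon$ for small $\varepsilon$.

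\textbf{Choice of $\varepsilon$.} Set $C:=h\lambda^2/(1-\mathrm{e}^{-2\mu})$. Since $2q>1$, the ratio $\varepsilon^{2q}/\varepsilon$ tends to $0$ as $\varepsilon\to0$. I therefore fix $\varepsilon\in(0,r)$ so small that $C\,\varepsilon^{2q}<\varepsilon/2$. This choice depends only on $r,q,\lambda,\mu$ and not on $u$, as the statement requires.

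\textbf{Choice of $\tau$.} Given an arbitrary input $u$, I choose $\tau\in\mathbb{N}$ with $\lambda\,\varepsilon^q\mathrm{e}^{-\mu\tau}\leq\varepsilon/2$. Such a $\tau$ exists because the exponential factor tends to zero, and $\tau$ may depend on $\varepsilon$, which is allowed. It remains to show that \cref{eq:MichelBook3} fails at this $\tau$, so I assume for contradiction that it holds there.

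\textbf{Lower bound on the distance travelled.} Since $|\mathrm{y}(\tau)|\le\|x(\tau)\|\le\lambda\varepsilon^q\mathrm{e}^{-\mu\tau}\le\varepsilon/2$ and $\mathrm{y}(0)=\varepsilon$, we get $|\mathrm{y}(0)-\mathrm{y}(\tau)|\geq\varepsilon/2$.

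\textbf{Upper bound on the distance travelled.} Here I need \cref{eq:MichelBook3} on the whole interval $[0\!:\!\tau)$. If it fails at some earlier time $t'<\tau$, I simply take $\tau:=t'$ instead, since the statement only asks for some violating time. So I may assume \cref{eq:MichelBook3} holds for all $t\in[0\!:\!\tau]$. From \cref{eq:unicycleModel:b} and $|\sin(\theta)|\le|\theta|$,
\[
|\mathrm{y}(0)-\mathrm{y}(\tau)| \le h\sum_{t=0}^{\tau-1}|v(t)|\,|\theta(t)| \le h\sum_{t=0}^{\tau-1}\|x(t)\|^2 \le h\lambda^2\varepsilon^{2q}\sum_{t\ge0}\mathrm{e}^{-2\mu t} = C\varepsilon^{2q}.
\]
By the choice of $\varepsilon$, this is strictly less than $\varepsilon/2$, which contradicts the lower bound.

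The argument uses only the $\mathrm{y}$-equation and the crude bound $|v\sin\theta|\le\|x\|^2$, so no control-theoretic machinery is needed.

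\textbf{Main obstacle.} The delicate point is the order of quantifiers. The number $\varepsilon$ must be fixed before $u$ and may depend only on $(r,q,\lambda,\mu)$, whereas $\tau$ may depend on $\varepsilon$ but must still work against every input $u$. I handle this by taking $\varepsilon$ from the $u$-independent inequality $C\varepsilon^{2q}<\varepsilon/2$, and $\tau$ from the $u$-independent requirement $\lambda\varepsilon^q\mathrm{e}^{-\mu\tau}\le\varepsilon/2$. The only $u$-dependence is the possible shift to an earlier violating time $t'$, which the statement permits.
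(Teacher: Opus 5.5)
Your proof is correct and follows the paper's argument. Both assume the bound holds, observe that $\mathrm{y}$ can only change through $h\,v\sin\theta$ with $|v\sin\theta|\le\|x\|^2$, and contradict the required displacement $\varepsilon/2$ using $\varepsilon^{2q}\ll\varepsilon$ for $2q>1$. The only difference is minor. You sum the decaying bound $\lambda^2\varepsilon^{2q}\mathrm{e}^{-2\mu t}$ as a geometric series, which makes the displacement estimate independent of $\tau$ and lets you fix $\varepsilon$ before $\tau$. The paper instead uses the non-decaying bound $\lambda\varepsilon^q$ and must choose $\varepsilon$ so that $\tau h\lambda^2\varepsilon^{2q-1}<1/2$, where $\tau\sim\log(1/\varepsilon)$.
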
%
A proof of \Cref{prop:noGES} can be found in \hyperlink{sec:A}{Appendix~A}.

Because of \Cref{prop:noGES}, the ``best'' exponent $q$ that one can expect for \cref{eq:unicycleModel} in a local exponential stability condition of the form \cref{eq:MichelBook3} is $q=1/2$. The exponent $q=1/2$ is directly linked to a local homogeneity property of the dynamic unicycle model, which originates from the product $v(t)\,\sin(\theta(t))$ in \cref{eq:unicycleModel} when the angle $\theta(t)$ and the velocity $v(t)$ are close to zero. A local exponential stability property of the form \cref{eq:MichelBook3} with $q=1/2$ is weaker than the classical notion with $q=1$, since it allows more overshoot for solutions that start close to the origin. This issue is also reflected by the initial condition \cref{eq:noGES} in \Cref{prop:noGES}. For small $\varepsilon>0$, initial condition \cref{eq:noGES} prohibits a fast ``zig-zag motion'' of the unicycle towards the origin with large velocities. This problem does not occur for the more frequently studied kinematic unicycle model \cref{eq:unicycleModel:a}-\cref{eq:unicycleModel:c} in which the velocities $v(t)$ and $\omega(t)$ are not components of the system state but inputs. Since \Cref{prop:noGES} rules out classical exponential stability for \cref{eq:unicycleModel}, we introduce the following potentially weaker notion of exponential stability.%
\begin{definition}\label{def:GES}
Let $\mathbb{X}$ be a finite-dimensional vector space. Let $\|\cdot\|$ be a norm on $\mathbb{X}$. Let $g\colon\mathbb{N}\times\mathbb{X}\to\mathbb{X}$ and $\gamma\in\mathcal{K}_\infty$. We say that the time-varying discrete-time system%
\begin{equation}\label{eq:MichelBook1}
x(t+1) \ = \ g(t,x(t))
\end{equation}%
is \emph{globally $\gamma$-exponentially stable} if there exist positive real numbers $\lambda$ and $\mu$ such that%
\begin{equation}\label{eq:MichelBook2}
\|x(t)\| \ \leq \ \gamma(\|x(0)\|)\,\lambda\,\mathrm{e}^{-\mu\,t}
\end{equation}%
for every solution $x$ of \cref{eq:MichelBook1} and every time $t\in\mathbb{N}$.
\end{definition}%
\Cref{def:GES} is inspired by the notion of \emph{$\mathcal{K}$-exponential stability} from \cite{Sordalen1995}. For $\gamma=\mathrm{id}$, inequality~\cref{eq:MichelBook2} reduces to the usual condition for global exponential stability. Because of \Cref{prop:noGES}, $\gamma(r)=\mathcal{O}(r^{1/2})$ for $r\to0$ is the ``best'' local asymptotic behavior of $\gamma\in\mathcal{K}_\infty$ that we can expect for the unicycle model \cref{eq:unicycleModel}. We will show in \Cref{sec:5} that this best-case scenario is actually established by flexible-step MPC. More precisely, we will prove global $\gamma$-exponential stability for the closed-loop system, where $\gamma\in\mathcal{K}_\infty$ is defined by%
\begin{equation}\label{eq:max}
\gamma(r) \ := \ \max\{r^{1/2},r\}.
\end{equation}%
That is, flexible-step MPC achieves the best possible local exponential stability property and also classical exponential stability for all solutions with initial states outside an arbitrary small neighborhood of the origin.

In the subsequent \Cref{sec:4}, a new approach to global exponential stabilization by flexible-step MPC for general nonlinear discrete-time control systems is presented. Later, in \Cref{sec:5}, the general approach is applied to the unicycle model \cref{eq:unicycleModel} and global $\gamma$-exponential stability (as in \Cref{def:GES}) is proved for $\gamma\in\mathcal{K}_\infty$ defined by \cref{eq:max}.

%----------------------------------------------------------------------
%
%                               Section 4
%
%----------------------------------------------------------------------

\section{Global exponential stabilization of nonlinear systems by flexible-step MPC}\label{sec:4}
In this section, we present a flexible-step MPC method for global exponential stabilization, which is not only applicable to the unicycle model \cref{eq:unicycleModel}, but to a more general class of nonlinear discrete-time control systems. The general approach in this section is applied to the unicycle model \cref{eq:unicycleModel} later in \Cref{sec:5}.

Throughout this section, let $\mathbb{X}$ and $\mathbb{U}$ be finite-dimen\-sional vector spaces and let $\|\cdot\|$ be a norm on $\mathbb{X}$. Let $f$ be a map from $\mathbb{X}\times\mathbb{U}$ to $\mathbb{X}$. We consider the nonlinear discrete-time control system%
\begin{equation}\label{eq:generalControlSystem}
x(t+1) \ = \ f(x(t),u(t))
\end{equation}%
with state $x(t)\in\mathbb{X}$ and input $u(t)\in\mathbb{U}$ at time $t\in\mathbb{N}$. Clearly, the unicycle model \cref{eq:unicycleModel} in the previous section is a special case of \cref{eq:generalControlSystem}. In the next paragraph, we introduce the key components of flexible-step MPC for global $\gamma$-exponential stability in the sense of \Cref{def:GES}. This involves a so-called generalized control Lyapunov function for \cref{eq:generalControlSystem} with suitable properties.
The subsequent \Cref{def:EGDCLF} provides a novel class of these control Lyapunov functions, which is tailored for the goal of establishing global $\gamma$-exponential stability in the sense of \Cref{def:GES}.%
\begin{definition}\label{def:EGDCLF}
Let $N$ be a positive integer. By an \emph{exponential generalized discrete-time control Lyapunov function (EGDCLF) of order $N$ for \cref{eq:generalControlSystem}} we mean a real-valued function $V$ on $\mathbb{X}$ together with a constant $\alpha\in(0,1)$ and functions $\sigma_1,\ldots,\sigma_N\colon\mathbb{X}\to(0,\infty)$ such that the following four properties \ref{def:EGDCLF:1}--\ref{def:EGDCLF:4} hold.\vspace{-0.3cm}

\begin{enumerate}[label=(P\arabic*),leftmargin=0.8cm]
	\item\label{def:EGDCLF:1} There exist $\chi_1,\chi_2\in\mathcal{K}_\infty$ such that%
	\begin{equation}\label{eq:EGDCLF:1}
	\chi_1(\|x\|) \ \leq \ V(x) \ \leq \ \chi_2(\|x\|)
	\end{equation}%
	for every $x\in\mathbb{X}$.%
	\item\label{def:EGDCLF:2} For every $x_0\in\mathbb{X}$, there exists $u\colon[\vphantom{]}0\!:\!N\vphantom{(})\to\mathbb{U}$ such that the solution $x\colon[0\!:\!N]\to\mathbb{X}$ of \cref{eq:generalControlSystem} with input $u$ and initial condition $x(0)=x_0$ satisfies the \emph{average descent condition}%
	\begin{equation}\label{eq:EGDCLF:2}
	\sum_{t=1}^N\sigma_t(x_0)\,V(x(t)) \ \leq \ (1-\alpha)\,V(x_0),
	\end{equation}%
	where%
	\begin{equation}\label{eq:EGDCLF:3}
	\sigma_1(x_0) + \cdots + \sigma_N(x_0) \ \geq \ 1.
	\end{equation}%
	\item\label{def:EGDCLF:3} There exists $\varphi\in\mathcal{K}_\infty$ such that%
	\begin{equation}\label{eq:EGDCLF:4}
	(1-\alpha)\,V(x) \ \leq \ \varphi(V(x))\cdot\min_{i\in[1:N]}\sigma_i(x)
	\end{equation}%
	for every $x\in\mathbb{X}$.%
	\item\label{def:EGDCLF:4} There exist $\tilde{\lambda},\tilde{\mu}>0$ such that the $\mathcal{K}_\infty$-functions $\chi_1$ from \ref{def:EGDCLF:1} and $\varphi$ from \ref{def:EGDCLF:3} satisfy%
	\begin{equation}\label{eq:EGDCLF:5}
	(\chi_1^{-1}\circ\varphi)(r\,\mathrm{e}^{-\tau}) \ \leq \ (\chi_1^{-1}\circ\varphi)(r)\,\tilde{\lambda}\,\mathrm{e}^{-\tilde{\mu}\,\tau}
	\end{equation}%
	for every $r\geq0$ and every $\tau\geq-\alpha$.
\end{enumerate}%
\end{definition}%
Properties \ref{def:EGDCLF:1}--\ref{def:EGDCLF:4} in \Cref{def:EGDCLF} serve the following purposes. Property \ref{def:EGDCLF:1} is just the standard requirement of a global control Lyapunov function to be positive definite and radially unbounded. Property \ref{def:EGDCLF:2} guarantees the existence of inputs over the \emph{prediction horizon} $N$ for which $V$ decays on average along solutions of \cref{eq:generalControlSystem}. Note that the average descent condition \cref{eq:EGDCLF:2} does not demand monotonic descent of $V$ in every step. It only requires that a weighted average of $V$ is reduced relative to the initial value. In the stability analysis of the closed-loop system (see proof of \Cref{thm:expStab} in \hyperlink{sec:B}{Appendix~B}), the EGDCLF $V$ will play the role of a \emph{non-monotonic Lyapunov function} (see e.g.~\cite{MichelBook}). The functions $\sigma_1,\ldots,\sigma_N$ in \cref{eq:EGDCLF:2} act as state-dependent \emph{weights} of the values of $V$ and the \emph{decay constant} $\alpha$ determines the magnitude of average descent. The state dependence of the weights is one of the main differences relative to the class of generalized control Lyapunov functions in \cite{Furnsinn20251}, where the weights are assumed to be constant. This novel feature will allow us to use very simple and basic EGDCLFs for flexible-step MPC, such as the state norm $V=\|\cdot\|$ for the unicycle model in \Cref{sec:5}. Property \ref{def:EGDCLF:3} is also a novel feature compared to \cite{Furnsinn20251}. It replaces the \emph{small-control property} in \cite{Furnsinn20251}, which can be difficult to check in practice because it requires a careful analysis of the input--state trajectories of \cref{eq:generalControlSystem}. By contrast, property \ref{def:EGDCLF:3} does not involve input--state trajectories of \cref{eq:generalControlSystem}. It provides a suitable upper bound for the values of $V$ in the average descent condition \cref{eq:EGDCLF:2}; see equation \cref{eq:proofExpStab:4} in \hyperlink{sec:B}{Appendix~B}. Finally, property \ref{def:EGDCLF:4} is a technical growth assumption on the comparison functions $\chi_1$ and $\varphi$ in order to get not just asymptotic stability but in fact exponential stability. It guarantees that the composition $\chi_1^{-1}\circ\varphi$ decays exponentially fast whenever its argument decays exponentially fast; see equation \cref{eq:proofExpStab:6} in \hyperlink{sec:B}{Appendix~B}.%
\begin{remark}
The defining properties of an EGDCLF in \Cref{def:EGDCLF} are tailored for the purpose of global exponential stabilization. Since exponential stability is a stronger property than asymptotic stability, some of the assumptions on an EGDCLF (like \ref{def:EGDCLF:2} and \ref{def:EGDCLF:4} in \Cref{def:EGDCLF}) are stronger than the assumptions on a \emph{generalized discrete-time control Lyapunov functions} (GDCLF) in \cite{Furnsinn20251}, where the focus is on asymptotic stabilization. Moreover, the definition of a GDCLF in \cite{Furnsinn20251} is more general than \Cref{def:EGDCLF} in the sense that a GDCLF may not only depend on the state but also on a finite sequence of previously predicted inputs. In the present paper, we only consider purely state-dependent EGDCLFs without an additional dependence on inputs.
\end{remark}%
Now we use the concept of an EGDCLF from \Cref{def:EGDCLF} to guarantee exponential stability for the control system~\cref{eq:generalControlSystem} by means of flexible-step MPC. To this end, we insert an EGDCLF into the constraints of an optimal control problem. We will subsequently consider the following type of optimal control problem.%
\begin{problem}
Let $N$ be a positive integer, $f_0\colon\mathbb{X}\times\mathbb{U}\to\mathbb{R}$, and $F\colon\mathbb{X}\to\mathbb{R}$. Moreover, let $V\colon\mathbb{X}\to\mathbb{R}$, $\alpha\in(0,1)$, and $\sigma_1,\ldots,\sigma_N\colon\mathbb{X}\to(0,\infty)$. For any given initial state $x_0\in\mathbb{X}$, solve the optimal control problem%
\begin{align}
\mathop{\text{minimize}}\limits_{\substack{u_0,\ldots,u_{N-1}\in\mathbb{U}\\x_1,\ldots,x_N\in\mathbb{X}}} \qquad & \sum_{i=0}^{N-1}f_0(x_i,u_i) + F(x_N) \label{eq:generalOCP} \allowdisplaybreaks \\
\text{subject to} \qquad & \text{$\forall{i}\in[0\!:\!N)\colon \ \ x_{i+1}=f(x_i,u_i)$}, \nonumber \\
& \sum_{i=1}^N\sigma_i(x_0)\,V(x_i) \leq (1-\alpha)\,V(x_0), \nonumber
\end{align}%
where it is assumed that if \cref{eq:generalOCP} is \emph{feasible}, i.e, the set over which optimization takes place is not empty, then \cref{eq:generalOCP} has a solution.
\end{problem}%
In this paper, we only consider optimal control problems without hard state and input constraints. Extensions to flexible-step MPC with constraints, as in \cite{Furnsinn20251}, are left to future research. Note, however, that one can use the \emph{running cost} $f_0$ and the \emph{terminal cost} $F$ to enforce soft state and input constraints. In the general stability result (\Cref{thm:expStab}), we do not impose assumptions on $f_0$ and $F$, which means that the user can choose $f_0$ and $F$ freely in any desired manner.

The average descent condition in the last line of \cref{eq:generalOCP} does not guarantee monotonic descent of $V$ in every step, but only for a weighted average of $V$. However, if the weights $\sigma_1,\ldots,\sigma_N$ also satisfy the bound \cref{eq:EGDCLF:3}, then it is possible to choose a ``flexible step'' for which descent of $V$ by a factor $(1-\alpha)$ is guaranteed. For a later reference in the proposed flexible-step MPC algorithm (\Cref{algo:fsMPC}), we make following remark.%
\begin{remark}[choice of a flexible step]\label{rmk:choiceOfFS}
Assume that, for an initial state $x_0\in\mathbb{X}$, the optimal control problem \cref{eq:generalOCP} is feasible and provides optimal inputs $u_0,\ldots,u_{N-1}\in\mathbb{U}$ and optimal states $x_1,\ldots,x_N\in\mathbb{X}$. In addition, assume that inequality \cref{eq:EGDCLF:3} is satisfied. Then, it follows from \cref{eq:EGDCLF:3} and the average descent condition \cref{eq:generalOCP} that there exists a ``flexible step'' $\ell\in[1\!:\!N]$ such that $V(x_\ell)\leq(1-\alpha)\,V(x_0)$. An implementation of the first $\ell$ inputs $u_0,\ldots,u_{\ell-1}$ steers \cref{eq:generalControlSystem} from $x_0$ to $x_\ell$, resulting in a non-monotonic decay of $V$. To induce fast convergence of the system state to the origin, one can select the ``flexible step'' $\ell$ as the first time instant at which the value of $V$ is reduced by the factor $(1-\alpha)$. That is, one chooses the smallest $\ell\in[1\!:\!N]$ for which $V(x_\ell)\leq(1-\alpha)V(x_0)$.
\end{remark}%
Now we can formulate a flexible-step MPC algorithm for the nonlinear discrete-time control system \cref{eq:generalControlSystem}.%
\begin{algorithmo}[flexible-step MPC]\label{algo:fsMPC}$ $\vspace{-0.3cm}

\begin{enumerate}[label=\arabic*:,ref=\arabic*]\setcounter{enumi}{-1}
	\item\label{algo:fsMPC:0} Set $t:=0$.%
	\item\label{algo:fsMPC:1} Measure the current state $x(t)$ of \cref{eq:generalControlSystem}.%
	\item\label{algo:fsMPC:2} Solve the optimal control problem \cref{eq:generalOCP} with initial state $x_0:=x(t)$ and obtain optimal inputs $u_0,\ldots,u_{N-1}\in\mathbb{U}$ and optimal states $x_1,\ldots,x_N\in\mathbb{X}$.%
	\item\label{algo:fsMPC:3} Choose a ``flexible step'' $\ell\in[1\!:\!N]$ such that $V(x_\ell)\leq(1-\alpha)\,V(x_0)$; cf.~\Cref{rmk:choiceOfFS}.%
	\item\label{algo:fsMPC:4} Apply $u(t):=u_k$ to \cref{eq:generalControlSystem} and increment $t\to{t+1}$ for $k=0,1\ldots,\ell-1$. Go to line \ref{algo:fsMPC:1}.
\end{enumerate}%
\end{algorithmo}%
Clearly, for \Cref{algo:fsMPC} to be well-defined, one has to guarantee that the optimal control problem \cref{eq:generalOCP} is always feasible. This is guaranteed by one of the defining properties of an EGDCLF, namely property \ref{def:EGDCLF:2} in \Cref{def:EGDCLF}. The following main result of this section states that if $V$ is an EGDCLF for \cref{eq:generalControlSystem}, then \Cref{algo:fsMPC} is well-defined and the closed-loop system is globally $\gamma$-exponentially stable in the sense of \Cref{def:GES}.%
\begin{theorem}\label{thm:expStab}
Let $N$ be a positive integer, $f_0\colon\mathbb{X}\times\mathbb{U}\to\mathbb{R}$, and $F\colon\mathbb{X}\to\mathbb{R}$. Assume that $V\colon\mathbb{X}\to\mathbb{R}$ is an EGDCLF of order $N$ for \cref{eq:generalControlSystem} with $\alpha\in(0,1)$, $\sigma_1,\ldots,\sigma_N\colon\mathbb{X}\to(0,\infty)$, and $\chi_1,\chi_2,\varphi\in\mathcal{K}_\infty$ as in \Cref{def:EGDCLF}. Then, the optimal control problem \cref{eq:generalOCP} is feasible for every initial state $x_0\in\mathbb{X}$. Moreover, system \cref{eq:generalControlSystem} controlled by \Cref{algo:fsMPC} is globally $\gamma$-exponentially stable, where $\gamma:=\chi_1^{-1}\circ\varphi\circ\chi_2\in\mathcal{K}_\infty$.
\end{theorem}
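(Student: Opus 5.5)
Feasibility is immediate from \ref{def:EGDCLF:2}: for every $x_0$ the input sequence provided there, together with its states, lies in the feasible set of \cref{eq:generalOCP}, so by the standing assumption an optimal solution exists. Because \cref{eq:EGDCLF:3} holds, \Cref{rmk:choiceOfFS} then gives an admissible flexible step $\ell\in[1\!:\!N]$, so \Cref{algo:fsMPC} is well defined. I will denote by $t_0=0<t_1<t_2<\cdots$ the times at which line~\ref{algo:fsMPC:2} is executed. Since $t_{j+1}-t_j\le N$, we have $t_j\le jN$.

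\textbf{Step 1 (decay at the sampling instants).} The choice of $\ell$ gives $V(x(t_{j+1}))\le(1-\alpha)V(x(t_j))$. Hence
\[
V(x(t_j))\le(1-\alpha)^jV(x(0))=V(x(0))\,\mathrm{e}^{-cj}, \qquad c:=-\ln(1-\alpha)>0.
\]

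\textbf{Step 2 (bound between sampling instants).} Let $t\in(t_j\!:\!t_{j+1})$, so $x(t)=x_i$ for some $i\in[1\!:\!N)$ of the optimal prediction from $x_0=x(t_j)$. All terms in the average descent constraint are nonnegative, since $V\ge\chi_1(\|\cdot\|)\ge0$ and $\sigma_i>0$. Therefore
\[
\sigma_i(x_0)V(x_i)\le(1-\alpha)V(x_0)\le\varphi(V(x_0))\min_k\sigma_k(x_0)\le\varphi(V(x_0))\,\sigma_i(x_0),
\]
using \ref{def:EGDCLF:3}. Dividing by $\sigma_i(x_0)>0$ yields $V(x_i)\le\varphi(V(x_0))$. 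Combined with \ref{def:EGDCLF:1}, this gives $\|x(t)\|\le(\chi_1^{-1}\circ\varphi)(V(x(t_j)))$. At the sampling instants themselves, $\|x(t_j)\|\le\chi_1^{-1}(V(x(t_j)))$. I will handle this case by also bounding it by $\chi_1^{-1}\circ\varphi$, either by treating $t_j$ as the endpoint $i=\ell$ of the previous segment or by a similar estimate. This is a small bookkeeping point: for $j\ge1$, $x(t_j)$ is the predicted state $x_\ell$ of the previous segment, so the same bound applies with index $j-1$. For $t=0$ one uses $(1-\alpha)V(x_0)\le\varphi(V(x_0))\min\sigma_i$ together with \cref{eq:EGDCLF:3}, which give $V(x_0)\le\varphi(V(x_0))/(1-\alpha)$, absorbed into the constant $\lambda$ after using $\chi_1^{-1}$ and \ref{def:EGDCLF:4}.

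\textbf{Step 3 (exponential decay through \ref{def:EGDCLF:4}).} This is the main step. For $t\in[t_j\!:\!t_{j+1}]$ (or $(t_{j-1}\!:\!t_j]$ after reindexing), set $r=V(x(0))$. Then
\[
\|x(t)\|\le(\chi_1^{-1}\circ\varphi)(r\,\mathrm{e}^{-c j'}),
\]
where $j'$ is $j$ or $j-1$. I want to write $\mathrm{e}^{-cj'}$ as $\mathrm{e}^{-\tau}$ with $\tau\ge-\alpha$. This is where the constraint $\tau\ge-\alpha$ in \cref{eq:EGDCLF:5} enters, so that the index shift costs only a bounded factor. Note that $c\ge\alpha$, since $-\ln(1-\alpha)\ge\alpha$. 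Choosing $\tau=cj'$ with $j'\ge0$ is already admissible. Applying \cref{eq:EGDCLF:5} gives
\[
\|x(t)\|\le(\chi_1^{-1}\circ\varphi)(r)\,\tilde\lambda\,\mathrm{e}^{-\tilde\mu cj'}.
\]
Finally, $t\le t_{j+1}\le(j'+2)N$, so $j'\ge t/N-2$. This yields $\mathrm{e}^{-\tilde\mu cj'}\le \mathrm{e}^{2\tilde\mu c}\mathrm{e}^{-(\tilde\mu c/N)t}$. Using $r=V(x(0))\le\chi_2(\|x(0)\|)$ and monotonicity of $\chi_1^{-1}\circ\varphi$, we obtain \cref{eq:MichelBook2} with $\gamma=\chi_1^{-1}\circ\varphi\circ\chi_2$, $\mu=\tilde\mu c/N$, and $\lambda=\tilde\lambda\mathrm{e}^{2\tilde\mu c}$ (times the extra constant for $t=0$).

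\textbf{Main obstacle.} The delicate part is the initial state $t=0$ and making the index bookkeeping consistent so that only admissible $\tau\ge-\alpha$ are used. For $x(0)$ I would try to use the bound $V(x_0)\le \mathrm{e}^{\alpha}\varphi(V(x_0))$ together with $\tau=-\alpha$ in \cref{eq:EGDCLF:5}. Apparently this is exactly why negative $\tau$ down to $-\alpha$ is allowed there, since $1/(1-\alpha)$ versus $\mathrm{e}^{\alpha}$ needs to be checked.
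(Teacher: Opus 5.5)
Your argument for $t\ge1$ is correct and is essentially the paper's proof. It follows the same chain: feasibility from (P2), then the decay $V(x(\tau_{k+1}))\le(1-\alpha)V(x(\tau_k))$ at the re-optimization instants $\tau_k$ (your $t_j$), then the bound $V(x(t))\le\varphi(V(x(\tau_k)))$ for $t\in(\tau_k\!:\!\tau_{k+1}]$ from the descent constraint and (P3), with the endpoint $\tau_{k+1}$ attributed to the previous segment as you do, and finally (P4).

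The only difference is the order of the last two steps. The paper first uses $1-\alpha\le\mathrm{e}^{-\alpha}$ and $t\le(k+1)N$ to get $V(x(t))\le\varphi(V(x_0)\,\mathrm{e}^{\alpha-\alpha t/N})$. It then applies (P4) with $\tau=\alpha t/N-\alpha$, which is negative for $t<N$. \emph{That} is what the range $\tau\ge-\alpha$ is for, not $t=0$. You apply (P4) with $\tau=-j'\ln(1-\alpha)\ge0$ and convert $j'$ into $t$ afterwards, which shows that $\tau\ge0$ already suffices.

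Your $t=0$ step, however, is wrong, and no argument can repair it. \eqref{eq:EGDCLF:3} is a \emph{lower} bound on $\sum_i\sigma_i(x_0)$ and gives no upper bound on $\min_i\sigma_i(x_0)$. So (P3) does not yield $V(x_0)\le\varphi(V(x_0))/(1-\alpha)$. Even a bound $V(x_0)\le K\varphi(V(x_0))$ would only give $\|x_0\|\le\chi_1^{-1}(K\varphi(V(x_0)))$, whereas (P4) controls $(\chi_1^{-1}\circ\varphi)(Ks)$, not $\chi_1^{-1}(K\varphi(s))$. Besides, $1/(1-\alpha)>\mathrm{e}^{\alpha}$, so the comparison you wanted to check goes the wrong way.

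More fundamentally, at $t=0$ inequality \eqref{eq:MichelBook2} just says $\|x_0\|\le\lambda\gamma(\|x_0\|)$, and (P1)--(P4) do not force $\gamma(r)\ge c\,r$. Here is a counterexample:
\begin{itemize}
\item take $\mathbb{X}=\mathbb{U}=\mathbb{R}$, $f(x,u)=u$, $N=1$, $V=|\cdot|$;
\item set $\sigma_1(x)=\max\{1,(1-\alpha)/|x|\}$ for $x\neq0$ and $\sigma_1(0)=1$;
\item set $\chi_1=\chi_2=\mathrm{id}$, $\varphi(r)=r^2$, $\tilde\lambda=1$, $\tilde\mu=2$.
\end{itemize}
All of (P1)--(P4) hold (use $u=0$ in (P2)). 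Yet $\gamma(r)=r^2$, and $|x_0|\le\lambda|x_0|^2$ fails for small $x_0\neq0$.

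So the theorem, read literally at $t=0$, is false in general. The paper's proof only establishes the bound for $t\in(\tau_k\!:\!\tau_{k+1}]$, i.e.\ $t\ge1$, and silently skips $t=0$. The correct conclusion is the estimate for $t\ge1$. Equivalently, one gets global $\max\{\gamma,\mathrm{id}\}$-exponential stability with $\lambda$ replaced by $\max\{\lambda,1\}$. This does not affect \Cref{corollary}, since for the choices in \Cref{prop:cond:1} and \Cref{prop:cond:2} one has $\gamma(r)\ge c\,r$.
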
%
A proof of \Cref{thm:expStab} can be found in \hyperlink{sec:B}{Appendix~B}.

To the best of our knowledge, \Cref{thm:expStab} is the first stability result for flexible-step MPC with state-dependent weights $\sigma_1,\ldots,\sigma_N$. As shown in the next section, the state dependence of the weights allows us to consider simple Lyapunov functions, such as the state norm for the unicycle model \cref{eq:unicycleModel}. In contrast to the stability results in \cite{Furnsinn20251} for constant weights, \Cref{thm:expStab} does not require an additional \emph{small control property} of \cref{eq:generalControlSystem} but only the control-independent boundedness property \ref{def:EGDCLF:3} in \Cref{def:EGDCLF}.

%----------------------------------------------------------------------
%
%                               Section 5
%
%----------------------------------------------------------------------

\section{Global exponential stabilization of a unicycle by flexible-step MPC}\label{sec:5}
Now we apply the proposed flexible-step MPC method (\Cref{algo:fsMPC}) to the unicycle model \cref{eq:unicycleModel}. That is, as a special case of the problem in \Cref{sec:4}, we consider system~\cref{eq:generalControlSystem} with state space $\mathbb{X}:=\mathbb{R}^5$, input space $\mathbb{U}:=\mathbb{R}^2$, Euclidean norm $\|\cdot\|$, and, using the notation in \cref{eq:unicycleNotation}, the transition map $f\colon\mathbb{R}^5\times\mathbb{R}^2\to\mathbb{R}^5$ is defined by%
\begin{equation}\label{eq:unicycleMap}
f(x,u) \ := \ \left[\begin{smallmatrix} \mathrm{x} + h\,v\,\cos(\theta) \\ \mathrm{y} + h\,v\,\sin(\theta) \\ \theta + h\,\omega \\ v + \tfrac{h}{m}\,(F - k\,v) \\ \omega + \tfrac{h}{J}\,(\mathcal{T}-\kappa\,\omega) \end{smallmatrix}\right],
\end{equation}%
where $m,J,k,\kappa,h>0$ are arbitrary but fixed constants. As explained in \Cref{sec:3}, the ``best'' notion of global $\gamma$-exponential stability for the unicycle model \cref{eq:unicycleModel} in the sense of \Cref{def:GES} is the one with $\gamma\in\mathcal{K}_\infty$ defined by \cref{eq:max}. For a successful application of \Cref{algo:fsMPC}, we have to choose an EGDCLF $V\colon\mathbb{R}^5\to\mathbb{R}$ for \cref{eq:unicycleModel} with $\alpha\in(0,1)$ and $\sigma_1,\ldots,\sigma_N\colon\mathbb{R}^5\to(0,\infty)$ as in \Cref{def:EGDCLF}. The subsequent \Cref{cond:1,cond:2} provide two examples of possible choices of $V$, $\alpha$, and $\sigma_1,\ldots,\sigma_N$.%
\begin{condition}\label{cond:1}
Choose $N\in\mathbb{N}$, $V\colon\mathbb{R}^5\to\mathbb{R}$, $\alpha\in(0,1)$, and $\sigma_1,\ldots,\sigma_N\colon\mathbb{R}^5\to(0,\infty)$ such that the following three conditions \ref{cond:1:1}--\ref{cond:1:3} are satisfied.\vspace{-0.3cm}

\begin{enumerate}[label=(C\arabic*),leftmargin=0.8cm]
	\item\label{cond:1:1} $N\geq8$.
    \item\label{cond:1:2} The function $V$ is given by%
    \begin{equation}\label{cond:1:eq:1}
    V(x) \ = \ \big(\mathrm{x}^4 + |\mathrm{y}|^{\phi(|\mathrm{y}|)} + \theta^4 + v^4 + \omega^4\big)^{1/4},
    \end{equation}%
    where $\phi\colon[0,\infty)\to[2,4]$ is non-decreasing, and satisfies $\phi(s)=2$ for $s\leq1$ and $\phi(s)=4$ for $s\geq2$.
    \item\label{cond:1:3} The functions $\sigma_1,\ldots,\sigma_N$ are constant and satisfy%
    \begin{subequations}
    \begin{equation}
    \sigma_i \ \leq \ \tfrac{1-\alpha}{(N-1)\,c}
    \end{equation}
    for every $i\in\{1,\ldots,N-1\}$ and
    \begin{equation}
    \sigma_N \ = \ 1 - (\sigma_1 + \cdots + \sigma_{N-1}),
    \end{equation}
    \end{subequations}
    where
    \begin{equation}
    \begin{split}
    c \ := \ \max\{& 32(3+\tfrac{16\cdot 7^4}{h^4\max(7,N-8)^4}),\; \\
    &(24h^4+\tfrac{128\cdot 7^4}{\max(7,N-8)^4})\}^\frac14. 
    \end{split}
    \end{equation}
\end{enumerate}%
\end{condition}%
The function $V$ in \cref{cond:1:eq:1} is inspired by the running cost function in \cite{Worthmann2016} for MPC for a kinematic unicycle. For states $x\in\mathbb{R}^5$ with component $|\mathrm{y}|\leq1$, the function $V$ in \cref{cond:1:eq:1} coincides with the running cost function from \cite{Worthmann2016}. The function from \cite{Worthmann2016} is well-suited for semi-global stabilization but not for global stabilization. To circumvent this problem, we introduce the state-dependent exponent function $\phi$ in \ref{cond:1:2} of \Cref{cond:1}. For states $x\in\mathbb{R}^5$ with component $|\mathrm{y}|\geq2$, the function $V$ in \cref{cond:1:eq:1} coincides with the $4$-norm on $\mathbb{R}^5$. The function $\phi$ provides a transition between the exponents $2$ and $4$ on the compact interval $[1,2]$. A transition from $2$ to $4$ on an interval different than $[1,2]$ would also be possible.%
\begin{proposition}\label{prop:cond:1}
Choose $N\in\mathbb{N}$, $V\colon\mathbb{R}^5\to\mathbb{R}$, $\alpha\in(0,1)$, and $\sigma_1,\ldots,\sigma_N\colon\mathbb{R}^5\to(0,\infty)$ according to \Cref{cond:1}. Then $V$ is an EGDCLF of order $N$ for \cref{eq:unicycleModel} with $\alpha$ and $\sigma_1,\ldots,\sigma_N$ as in \Cref{def:EGDCLF}. Moreover, there exist constants $\xi_1,\xi_2,\zeta>0$ such that%
\begin{equation}\label{prop:cond:1:eq}%
\begin{split}
\chi_1(r) & \ := \ \xi_1\,r, \qquad \chi_2(r) \ := \ \xi_2\,\max\{r^{1/2},r\}, \\
\varphi(r) & \ := \ \zeta\,r
\end{split}%
\end{equation}%
define $\mathcal{K}_\infty$-functions $\chi_1,\chi_2,\varphi$ for which \cref{eq:EGDCLF:1} and \cref{eq:EGDCLF:4} are satisfied for every $x\in\mathbb{R}^5$.
\end{proposition}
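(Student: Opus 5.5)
The plan is to verify the four properties \ref{def:EGDCLF:1}--\ref{def:EGDCLF:4} of \Cref{def:EGDCLF} one at a time. Properties \ref{def:EGDCLF:1}, \ref{def:EGDCLF:3} and \ref{def:EGDCLF:4} only involve $V$ and the constant weights, and I expect them to be routine. Property \ref{def:EGDCLF:2} requires an explicit steering construction for the unicycle, and I expect it to be the main work.

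\emph{Property \ref{def:EGDCLF:1} with $\chi_1,\chi_2$ as in \cref{prop:cond:1:eq}.} For the lower bound I would note that $|\mathrm{y}|^{\phi(|\mathrm{y}|)}\geq|\mathrm{y}|^4/16$ for all $\mathrm{y}$:
\begin{itemize}[leftmargin=0.5cm]
\item for $|\mathrm{y}|\leq1$ one has $|\mathrm{y}|^2\geq|\mathrm{y}|^4$;
\item for $1\leq|\mathrm{y}|\leq2$ one has $|\mathrm{y}|^{\phi}\geq|\mathrm{y}|^2\geq|\mathrm{y}|^4/4$;
\item for $|\mathrm{y}|\geq2$ the exponent is exactly $4$.
\end{itemize}
Hence $V(x)\geq\tfrac12\|x\|_4$, and equivalence of norms on $\mathbb{R}^5$ yields $\chi_1(r)=\xi_1 r$. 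For the upper bound, $|\mathrm{y}|^{\phi}\leq\max\{|\mathrm{y}|^2,|\mathrm{y}|^4\}$. Therefore $V(x)^4\leq\|x\|_4^4+\max\{\|x\|^2,\|x\|^4\}$, and $V(x)\leq\xi_2\max\{\|x\|^{1/2},\|x\|\}$ follows.

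\emph{Weights and properties \ref{def:EGDCLF:3}, \ref{def:EGDCLF:4}.} Since $c\geq1$, the weights satisfy $\sigma_1+\dots+\sigma_{N-1}\leq(1-\alpha)/c<1$. So $\sigma_N>0$, all weights are positive, and they sum to exactly $1$, which gives \cref{eq:EGDCLF:3}. With $\sigma_{\min}:=\min_i\sigma_i>0$, property \ref{def:EGDCLF:3} holds with $\varphi(r)=\zeta r$ and $\zeta:=(1-\alpha)/\sigma_{\min}$. Then $\chi_1^{-1}\circ\varphi$ is linear, so \cref{eq:EGDCLF:5} holds with $\tilde\lambda=\tilde\mu=1$.

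\emph{Property \ref{def:EGDCLF:2}, the key step.} I would prove a dead-beat statement: for every $x_0$ there is an input on $[0\!:\!N)$ with $x(N)=0$ and $V(x(i))\leq c\,V(x_0)$ for all $i\in[1\!:\!N)$. Given this, the average descent condition follows at once:
\begin{equation*}
\sum_{i=1}^N\sigma_iV(x(i))\leq\Big(\sum_{i<N}\sigma_i\Big)c\,V(x_0)+\sigma_N\cdot0\leq(1-\alpha)V(x_0).
\end{equation*}
The construction exploits that $v(t+1)$ and $\omega(t+1)$ can be assigned arbitrarily through $F(t)$ and $\mathcal{T}(t)$, so each step can prescribe the next velocities. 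I would use the following maneuver, which uses eight ``structural'' steps.
\begin{enumerate}[label=(\roman*),leftmargin=0.8cm]
\item Use $v$ and $\omega$ to rotate $\theta$ to zero.
\item Drive along the $\mathrm{x}$-axis until $\mathrm{x}=0$.
\item Rotate to a heading $\psi$.
\item Drive so that $h\,v\sin\psi$ accumulates $-\mathrm{y}$, spreading this over $\max(7,N-8)$ steps.
\item Rotate back to $\theta=0$, compensating the $\mathrm{x}$-drift produced by the lateral drive, and null the velocities.
\end{enumerate}
Each displacement is split evenly over several steps, which is why $\max(7,N-8)$ appears in $c$.

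The critical scaling is in the $\mathrm{y}$-correction. For $|\mathrm{y}|\leq1$ I would choose $\psi$ and $v$ of order $|\mathrm{y}|^{1/2}$, so that $v\sin\psi\sim|\mathrm{y}|$. Then the intermediate quantities $\theta^4$ and $v^4$ are of order $|\mathrm{y}|^2$, which matches the $|\mathrm{y}|^2$ term of $V(x_0)^4$. For $|\mathrm{y}|\geq2$ the same maneuver works with linear scaling against the $4$-norm, and the transition region $[1,2]$ is covered by the monotonicity of $\phi$.

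The main obstacle is this bookkeeping. One has to bound $V(x(i))^4$ at every intermediate step by a fixed multiple of $V(x_0)^4$, uniformly over all $x_0$ and all three $|\mathrm{y}|$-regimes. One must also keep track of the cross terms, namely the $\mathrm{x}$-displacement generated while correcting $\mathrm{y}$ and the $\theta$-excursions. I would first do the small-$|\mathrm{y}|$ regime with the square-root scaling, since that is where the constant $32(3+\dots)$ in $c$ should arise. The $h$-dependent second entry of $c$ would then come from the linear regime.
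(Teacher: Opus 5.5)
\textbf{There is a genuine gap: you never prove the dead-beat bound with the specific $c$ of Condition~1, and that bound is the whole content of the proposition.}

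Your handling of (P1), (P3), (P4), the positivity of $\sigma_N$, and the reduction of (P2) are fine. That reduction is to a dead-beat trajectory with $x(N)=0$ and $V(x(i))\le c\,V(x_0)$ for $i\in[1\!:\!N)$. All of this matches the paper's proof, which takes $\xi_1=1/20$, $\xi_2=2$, $\zeta=1/\min_i\sigma_i$ and $\tilde\lambda=\tilde\mu=1$; your constants work as well.

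The weights are only assumed to satisfy $\sigma_i\le(1-\alpha)/((N-1)c)$ for that specific $c$. So you must exhibit a maneuver that stays below exactly this $c$, and your sketch does not yet define one.

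\textbf{Step (v).} It cannot work as written: a rotation does not move $(\mathrm{x},\mathrm{y})$, so the $\mathrm{x}$-drift of the oblique leg needs a further translation phase. The paper instead uses a second oblique leg. After the forced first step it runs seven phases of $T=\lfloor (N-1)/7\rfloor$ steps each:
\begin{enumerate}
\item rotate $\theta_1\to0$;
\item drive $\mathrm{x}_1\to0$;
\item rotate to $-\bar\theta$;
\item drive forward a distance $d$ to $(\bar{\mathrm{x}},\mathrm{y}_1/2)$;
\item rotate to $+\bar\theta$;
\item reverse a distance $d$ to the origin;
\item rotate to $0$.
\end{enumerate}
The $\mathrm{x}$-components cancel, and each leg removes only half of $\mathrm{y}_1$.

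\textbf{Time allocation.} The velocity terms in $c$ carry the factor $7^4/\max(7,N-8)^4\ge T^{-4}$. So $c$ is calibrated to a construction in which every phase, not only the lateral drive, is spread over $T$ steps. Your ``eight structural steps'' plus $\max(7,N-8)$ steps for (iv) does not fit into the horizon for $N\le14$. With single-step rotations, $x_0=(0,0,\theta_0,0,0)$ already gives $V(x(1))^4=(1+h^{-4})\theta_0^4$. This exceeds $c^4V(x_0)^4$ for small $h$ and large $N$.

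\textbf{The forced first step (the missing idea).} The velocities $v_0,\omega_0$ cannot be cancelled before the unicycle moves, so the offset to remove is $\mathrm{y}_1=\mathrm{y}_0+hv_0\sin\theta_0$, not $\mathrm{y}_0$. With your square-root scaling the intermediate terms are of order $|\mathrm{y}_1|^2$. That does not ``match'' any term of $V(x_0)^4$, which contains $v_0$ and $\theta_0$ only quartically. What closes this is
\[
|\mathrm{y}_1|^{\phi(|\mathrm{y}_1|)}\le 32\,|\mathrm{y}_0|^{\phi(|\mathrm{y}_0|)}+8h^4v_0^4+\theta_0^4 .
\]
\begin{itemize}
\item In the small regime this rests on $(hv_0\sin\theta_0)^2\le\tfrac12(h^4v_0^4+\theta_0^4)$, using $|\sin\theta_0|\le|\theta_0|$.
\item In the transition region it rests on $a^4\le4a^{\phi(a)}$ for $a\in[1,2]$. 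Together with $(a+b)^4\le8a^4+8b^4$, this is where the factor $32$ comes from.
\end{itemize}
The paper also uses $\max\{\bar{\mathrm{x}}^4,\bar\theta^4,|\bar{\mathrm{y}}|^{\phi(|\bar{\mathrm{y}}|)},d^4\}\le|\mathrm{y}_1|^{\phi(|\mathrm{y}_1|)}$. With this, phases three to seven are bounded by $(3+16/(hT)^4)\,|\mathrm{y}_1|^{\phi(|\mathrm{y}_1|)}$. After bounding $T^{-4}\le7^4/\max(7,N-8)^4$, the two entries of $c^4$ are exactly the resulting coefficients of $|\mathrm{y}_0|^{\phi(|\mathrm{y}_0|)}$ and of $v_0^4$. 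They are not a small-versus-linear regime split, as you guessed.

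\textbf{Your single oblique leg.} Even once repaired, for the same headings it needs length $|\mathrm{y}_1|^{1/2}(1+|\mathrm{y}_1|)^{1/2}$, resp.\ $\sqrt2\,|\mathrm{y}_1|$. That is twice the paper's $d$, with twice the $\mathrm{x}$-excursion. In the transition region $|\mathrm{y}_1|\in(1,2)$ the per-phase factor $3$ therefore no longer holds. You would have to recheck separately that the maneuver stays under this particular $c$.
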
%
A proof of \Cref{prop:cond:1} can be found in \hyperlink{sec:C}{Appendix~C}. As an alternative to \Cref{cond:1}, another example of a suitable choice of $V$, $\alpha$, and $\sigma_1,\ldots,\sigma_N$ is provided in the subsequent \Cref{cond:2}.%
\begin{condition}\label{cond:2}
Choose $N\in\mathbb{N}$, $V\colon\mathbb{R}^5\to\mathbb{R}$, $\alpha\in(0,1)$, and $\sigma_1,\ldots,\sigma_N\colon\mathbb{R}^5\to(0,\infty)$ such that the following three conditions \ref{cond:2:1}--\ref{cond:2:3} are satisfied.\vspace{-0.3cm}

\begin{enumerate}[label=(C\arabic*),leftmargin=0.8cm]
	\item\label{cond:2:1} $N\geq8$.
  \item\label{cond:2:2} The function $V$ is given by%
  \begin{equation}\label{cond:2:eq:1}
  V(x) \ = \ \|x\|.
  \end{equation}%
  \item\label{cond:2:3} The functions $\sigma_1,\ldots,\sigma_N$ are given by%
  \begin{subequations}\label{cond:2:eq:2}%
  \begin{equation}\label{cond:2:eq:2:a}
  \!\!\!\!\!\! \sigma_i(x) \ = \  \tfrac{1-\alpha}{(N-1)\,c}\cdot\left\{ \begin{tabular}{cl} $\!\!\|x\|^{\frac{1}{2}}\!\!$ & if $\ 0<\|x\|<1$ \\ 1 & otherwise \end{tabular} \right.
  \end{equation}%
  for every $i\in\{1,\ldots,N-1\}$ and by%
  \begin{equation}\label{cond:2:eq:2:b}
  \sigma_N(x) \ = \ 1 - \big(\sigma_1(x) + \cdots + \sigma_{N-1}(x)\big),
  \end{equation}%
  \end{subequations}
  where
  \begin{equation}
  c \ := \ 2\,(1+h^2)^{\frac{1}{2}}\,\big(\tfrac{9}{4}+\tfrac{4}{h^2\max\{1,(N-8)/7\}^2}\big)^{\frac{1}{2}}.
  \end{equation}
\end{enumerate}%
\end{condition}%
The function $V$ in \cref{cond:2:eq:1} is just the norm of the system state of the dynamic unicycle model \cref{eq:unicycleModel}. Such a very simple choice of $V$ is possible due to the state dependence of the weights $\sigma_1,\ldots,\sigma_N$. The first $N-1$ weights in \cref{cond:2:eq:2:a} become smaller with decreasing distance to the origin. This novel feature gives the unicycle more freedom to move from a ``critical'' initial state like \cref{eq:noGES} in \Cref{prop:noGES} towards the origin within the prediction horizon.%
\begin{proposition}\label{prop:cond:2}
Choose $N\in\mathbb{N}$, $V\colon\mathbb{R}^5\to\mathbb{R}$, $\alpha\in(0,1)$, and $\sigma_1,\ldots,\sigma_N\colon\mathbb{R}^5\to(0,\infty)$ according to \Cref{cond:2}. Then $V$ is an EGDCLF of order $N$ for \cref{eq:unicycleModel} with $\alpha$ and $\sigma_1,\ldots,\sigma_N$ as in \Cref{def:EGDCLF}. Moreover, there exist constants $\xi_1,\xi_2,\zeta>0$ such that%
\begin{equation}\label{prop:cond:2:eq}%
\begin{split}
\chi_1(r) & \ := \ \xi_1\,r, \qquad \chi_2(r) \ := \ \xi_2\,r, \\
\varphi(r) & \ := \ \zeta\,\max\{r^{1/2},r\}
\end{split}%
\end{equation}%
define $\mathcal{K}_\infty$-functions $\chi_1,\chi_2,\varphi$ for which \cref{eq:EGDCLF:1} and \cref{eq:EGDCLF:4} are satisfied for every $x\in\mathbb{R}^5$.
\end{proposition}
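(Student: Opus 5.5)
We need to verify (P1)–(P4) for $V=\|\cdot\|$ with the weights of \Cref{cond:2}. Properties (P1), (P3) and (P4) are elementary; the real work is the average descent condition (P2).

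\emph{Property (P1).} It holds trivially with $\chi_1(r)=\chi_2(r)=r$, so $\xi_1=\xi_2=1$.

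\emph{Weights.} First we check that the weights are admissible. Since $\sigma_i(x)\le\frac{1-\alpha}{(N-1)c}$ for $i<N$ and $c\ge 3>1$, we get $\sigma_1+\cdots+\sigma_{N-1}\le\frac{1-\alpha}{c}<1$. Hence $\sigma_N>0$, and \cref{eq:EGDCLF:3} holds with equality.

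\emph{Property (P3).} For $i<N$ we have $\min_i\sigma_i(x)=\sigma_1(x)=\frac{1-\alpha}{(N-1)c}\min\{\|x\|^{1/2},1\}$. Note that $\sigma_N\ge 1-\frac{1-\alpha}{c}$ is bounded below by a constant, which dominates $\sigma_1$. Choose $\zeta=(N-1)c$. If $\|x\|<1$, then $(1-\alpha)\|x\|=\zeta\|x\|^{1/2}\cdot\frac{1-\alpha}{(N-1)c}\|x\|^{1/2}$, which is exactly \cref{eq:EGDCLF:4}. If $\|x\|\ge1$, then $(1-\alpha)\|x\|=\zeta\|x\|\cdot\frac{1-\alpha}{(N-1)c}$. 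Thus $\varphi(r)=\zeta\max\{r^{1/2},r\}$.

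\emph{Property (P4).} We have $\chi_1^{-1}\circ\varphi=\varphi$. Using $\max\{(re^{-\tau})^{1/2},re^{-\tau}\}\le e^{-\tau/2}\max\{r^{1/2},r\}$ for $\tau\ge0$, the bound holds with $\tilde\mu=1/2$. For $\tau\in[-\alpha,0)$ we have $e^{-\tau}\le e^{\alpha}$, so the bound holds with $\tilde\lambda=e^{\alpha}$ and $\tilde\mu=1/2$.

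\emph{Property (P2): strategy.} For a given $x_0$ we construct explicit inputs that steer the unicycle exactly to the origin at time $N$, so that $x(N)=0$. Then the $\sigma_N$ term vanishes, and it suffices to show
\[
\sum_{i=1}^{N-1}\sigma_i(x_0)\|x(i)\|\le(1-\alpha)\|x_0\|,
\]
that is, $\max_i\|x(i)\|\le c\,\|x_0\|$ if $\|x_0\|\ge1$ and $\max_i\|x(i)\|\le c\,\|x_0\|^{1/2}$ if $\|x_0\|<1$. Since $\|x_0\|^{1/2}\ge\|x_0\|$ for $\|x_0\|<1$, a uniform bound of the form $\|x(i)\|\le c\max\{\|x_0\|,\|x_0\|^{1/2}\}$ suffices.

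\emph{The steering maneuver.} Because $v(t+1)$ and $\omega(t+1)$ can be assigned arbitrarily through $F$ and $\mathcal{T}$ (deadbeat control of the velocities), the system behaves like a kinematic unicycle with a one-step input delay. We use a sequence of phases of one or a few steps each, giving roughly 8 steps in total plus waiting steps; this is where $N\ge8$ enters.
\begin{enumerate}[label=(\roman*)]
\item Set $v=\omega=0$ in one step, keeping the position and angle essentially unchanged up to terms of size $h\|x_0\|$.
\item Rotate $\theta$ to $0$ in one step.
\item Drive along $\mathrm{x}$ to cancel $\mathrm{x}$.
\item Rotate to $\theta=\pm\delta$.
\item Drive with speed $\mp\,\mathrm{y}/(h\sin\delta)$, choosing $\delta$ so that the velocity and the angle are balanced; spreading the motion over $m=\max\{1,(N-8)/7\}$ steps lowers the required velocity.
\item Rotate back to $\theta=0$.
\item Return $\mathrm{x}$ to $0$.
\item Zero the velocities.
\end{enumerate}

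\emph{Main obstacle.} The crux is the $\mathrm{y}$-correction in phase (v), which produces the $\frac{9}{4}+\frac{4}{h^2m^2}$ factor in $c$. Driving a displacement $|\mathrm{y}|$ requires $v\sin\theta\approx|\mathrm{y}|/h$. Choosing $|\theta|\sim|\mathrm{y}|^{1/2}$ and $|v|\sim|\mathrm{y}|^{1/2}/h$ keeps every state component of order $\|x_0\|^{1/2}$ when $\|x_0\|<1$. For $\|x_0\|\ge1$ we instead choose $\theta=\pi/2$ (or a bounded angle) with $v\sim|\mathrm{y}|/h$, giving order $\|x_0\|$. I would bound each phase's intermediate states via $\|x(i)\|\le(1+h^2)^{1/2}\cdot(\ldots)$, and check that the resulting constants stay below the stated $c$. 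This bookkeeping is the most delicate step: one must also track the residual $\mathrm{x}$-drift $hv\cos\theta$ during the $\mathrm{y}$-phase, which is then cancelled in phase (vii).
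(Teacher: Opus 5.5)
Your checks of (P1), (P3), (P4) and of $\sigma_N>0$ are correct and agree with the paper ($\xi_1=\xi_2=1$, $\zeta=(N-1)c$; your $\tilde\lambda=\mathrm{e}^{\alpha}$ is looser than the paper's $\mathrm{e}^{\alpha/2}$ but valid). Your plan for (P2) is also the paper's plan: steer exactly to the origin by rotate/translate phases, use an angle of order $|\mathrm{y}_1|^{1/2}$ for small states, and reduce the weighted sum to $\|x(i)\|\le c\max\{\|x_0\|^{1/2},\|x_0\|\}$. The gap is that (P2), which is the actual content of the proposition, stops at ``I would check that the constants stay below $c$''. For the maneuver you describe, this check fails. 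The constant $c$ depends on $N$ and tends to $3(1+h^2)^{1/2}$ as $N\to\infty$. In your plan only the $\mathrm{y}$-phase is spread over several steps, while the other phases take one step each. Those one-step phases create velocities such as $\theta_1/h$ or $\mathrm{x}_1/h$ that do not shrink as $N$ grows. For $x_0=[2,0,0,0,0]^\top$, phase (iii) passes through the state $[2,0,0,-2/h,0]^\top$. For this state, $\|x(i)\|\le c\,\|x_0\|$ is equivalent to $1\le 9h^2+16/\max\{1,(N-8)/7\}^2$, which is false e.g.\ for $h=0.1$, $N=50$. The paper avoids this by giving \emph{every} one of its seven phases the same integer length $T=\lfloor (N-1)/7\rfloor\ge\max\{1,(N-8)/7\}$, so that $t_8=1+7T\le N$. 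Then all velocities $\theta_1/(hT)$, $\mathrm{x}_1/(hT)$, $\bar\theta/(hT)$, $d/(hT)$, $2\bar\theta/(hT)$ carry the factor $1/(hT)$. This is exactly where the term $4/(h^2\max\{1,(N-8)/7\}^2)$ in $c$ comes from; the $4$ stems from the rotation by $2\bar\theta$. Note also that your $m$ is in general not an integer, so it cannot literally be a number of steps. If you prefer one-step phases followed by waiting at the origin, you must instead bound the sum using only the few nonzero terms, not the crude factor $N-1$.

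Two smaller points need care when you do the estimates. First, you only have $|\mathrm{y}_1|\le(1+h^2)^{1/2}\|x_0\|$, so $|\mathrm{y}_1|$ may exceed $1$ even though $\|x_0\|<1$ (large $h$). An angle literally equal to $|\mathrm{y}_1|^{1/2}$ can then make $\sin\delta$ vanish. The paper takes $\bar\theta=\mathrm{sgn}(\mathrm{y}_1)\arctan(|\mathrm{y}_1|^{1/2})$ if $|\mathrm{y}_1|\le1$ and $\pm\pi/4$ otherwise, which gives $\bar\theta^2\le|\mathrm{y}_1|$ and $d^2\le\tfrac12\max\{|\mathrm{y}_1|,|\mathrm{y}_1|^2\}$. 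Second, the paper corrects $\mathrm{y}$ by a two-leg zig-zag: forward along $-\bar\theta$ to $(\bar{\mathrm{x}},\mathrm{y}_1/2)$, then backward along $+\bar\theta$, so the $\mathrm{x}$-drift cancels without an extra phase. Your single leg followed by an $\mathrm{x}$-return phase is a legitimate alternative and uses one phase fewer. However, it doubles the drift and the driving speed relative to the paper's legs, so the $\tfrac94$ and the $4$ in $c$ are not what your phase (v) produces. The comparison with $c$ still goes through, because there is enough room in the prefactor $4(1+h^2)$ of $c^2$, but it has to be redone rather than read off.
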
%
A proof of \Cref{prop:cond:2} can be found in \hyperlink{sec:C}{Appendix~C}.%
\begin{remark}
The choice of parameters according to \Cref{cond:1} or \Cref{cond:2} depends only on the step size $h>0$ but not on the physical constants $m$, $J$, $k$, and $\kappa$. This independence of the components from physical constants may be an advantageous feature for future extensions to flexible-step MPC for systems with unknown system parameters as, e.g., in \cite{Pietschner2026} for linear systems.
\end{remark}%
\Cref{cond:1,cond:2} offer two possible ways for the choice of parameters among many others. For these two choices of parameters, we can now directly conclude that flexible-step MPC renders the unicycle globally $\gamma$-exponentially stable in the sense of \Cref{def:GES}.%
\begin{corollary}\label{corollary}
Let $f_0\colon\mathbb{R}^5\times\mathbb{R}^2\to\mathbb{R}$, $F\colon\mathbb{R}^5\to\mathbb{R}$. Choose $N\in\mathbb{N}$, $V\colon\mathbb{R}^5\to\mathbb{R}$, $\alpha\in(0,1)$, $\sigma_1,\ldots,\sigma_N\colon\mathbb{R}^5\to(0,\infty)$ according to \Cref{cond:1} or \Cref{cond:2}. Then, the optimal control problem \cref{eq:generalOCP} with $f\colon\mathbb{R}^5\times\mathbb{R}^2\to\mathbb{R}^5$ defined by \cref{eq:unicycleMap} is feasible for every initial state $x_0\in\mathbb{R}^5$. Moreover, the unicycle model \cref{eq:unicycleModel} controlled by \Cref{algo:fsMPC} is globally $\gamma$-exponentially stable, where $\gamma\in\mathcal{K}_\infty$ is defined by \cref{eq:max}.
\end{corollary}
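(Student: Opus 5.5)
The corollary follows by combining \Cref{prop:cond:1} or \Cref{prop:cond:2} with \Cref{thm:expStab}. The only real work is to compute the gain $\gamma=\chi_1^{-1}\circ\varphi\circ\chi_2$ and check that it is dominated by a constant multiple of the function in \cref{eq:max}.

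First, under either condition, the corresponding proposition states that $V$ is an EGDCLF of order $N$ for \cref{eq:unicycleModel} with the given $\alpha,\sigma_1,\ldots,\sigma_N$. Since $f_0$ and $F$ are arbitrary in \Cref{thm:expStab}, that theorem gives feasibility of \cref{eq:generalOCP} for every $x_0\in\mathbb{R}^5$. It also gives global $\tilde\gamma$-exponential stability of the closed loop under \Cref{algo:fsMPC} with $\tilde\gamma:=\chi_1^{-1}\circ\varphi\circ\chi_2$. For this we may use any $\chi_1,\chi_2,\varphi$ satisfying \ref{def:EGDCLF:1}--\ref{def:EGDCLF:4}. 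One point needs checking: the propositions assert that the explicit functions in \cref{prop:cond:1:eq} and \cref{prop:cond:2:eq} satisfy \cref{eq:EGDCLF:1} and \cref{eq:EGDCLF:4}, but \ref{def:EGDCLF:4} must hold for these same $\chi_1,\varphi$.
\begin{itemize}
\item Under \Cref{cond:1}, $\chi_1^{-1}\circ\varphi(r)=(\zeta/\xi_1)r$ is linear, so \cref{eq:EGDCLF:5} holds with $\tilde\lambda=\tilde\mu=1$.
\item Under \Cref{cond:2}, $\chi_1^{-1}\circ\varphi(r)=(\zeta/\xi_1)\max\{r^{1/2},r\}$. For $\tau\ge0$ we have $\max\{(re^{-\tau})^{1/2},re^{-\tau}\}\le e^{-\tau/2}\max\{r^{1/2},r\}$. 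For $\tau\in[-\alpha,0)$ we have $\max\{(re^{-\tau})^{1/2},re^{-\tau}\}\le e^{-\tau}\max\{r^{1/2},r\}\le e^{\alpha}e^{-\tau/2}\max\{r^{1/2},r\}$, using $-\tau/2\le\alpha/2$. Hence \cref{eq:EGDCLF:5} holds with $\tilde\mu=1/2$ and $\tilde\lambda=e^{\alpha}$.
\end{itemize}
So \Cref{thm:expStab} applies with exactly these functions.

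Next we compute $\tilde\gamma$.
\begin{itemize}
\item Under \Cref{cond:1}: $\tilde\gamma(r)=(\zeta\xi_2/\xi_1)\max\{r^{1/2},r\}$.
\item Under \Cref{cond:2}: $\tilde\gamma(r)=(\zeta/\xi_1)\max\{(\xi_2r)^{1/2},\xi_2r\}\le(\zeta/\xi_1)\max\{\xi_2^{1/2},\xi_2\}\max\{r^{1/2},r\}$.
\end{itemize}
In both cases $\tilde\gamma\le C\gamma$ with $\gamma$ as in \cref{eq:max} and some constant $C>0$. Therefore the estimate $\|x(t)\|\le\tilde\gamma(\|x(0)\|)\lambda e^{-\mu t}$ implies \cref{eq:MichelBook2} with $\gamma$ and $\lambda$ replaced by $C\lambda$. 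This is global $\gamma$-exponential stability in the sense of \Cref{def:GES}.

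There is no substantial obstacle, since the heavy lifting lies in \Cref{prop:cond:1,prop:cond:2} and \Cref{thm:expStab}. The only delicate step is verifying \ref{def:EGDCLF:4} for the nonlinear $\varphi$ of \Cref{cond:2} on the range $\tau\in[-\alpha,0)$, handled above by absorbing the factor $e^{\alpha}$ into $\tilde\lambda$.
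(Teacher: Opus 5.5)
Your proposal is correct and follows the paper's proof, which is simply that the corollary is a direct consequence of \Cref{thm:expStab} and \Cref{prop:cond:1,prop:cond:2}. You also make explicit two points the paper's one-line proof leaves implicit. The first is that \ref{def:EGDCLF:4} holds for the specific $\chi_1,\varphi$; the paper checks this inside Appendix~C, using the slightly sharper $\tilde\lambda=e^{\alpha/2}$ for \Cref{cond:2}. The second is that the constant $C$ in $\chi_1^{-1}\circ\varphi\circ\chi_2\le C\max\{r^{1/2},r\}$ is absorbed into $\lambda$.
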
%
\begin{pf}
The statement is a direct consequence of \Cref{thm:expStab} and \Cref{prop:cond:1,prop:cond:2}.\hfill$\Box$
\end{pf}%
Recall from \Cref{sec:3} that the $\mathcal{K}_\infty$-function $\gamma$ defined in \cref{eq:max} describes the ``best'' notion of global $\gamma$-exponential stability that one can expect for the unicycle model \cref{eq:unicycleModel}. \Cref{corollary} states that this optimal stability notion can be established by flexible-step MPC. In the next section, we show through numerical simulations that the proposed method is not limited to the discretized unicycle model \cref{eq:unicycleModel}, but can also be successfully applied to the force- and torque-controlled continuous-time unicycle \cref{eq:unicycleModelContinuousTime} even when the step size $h$ is large.

%----------------------------------------------------------------------
%
%                               Section 6
%
%----------------------------------------------------------------------

\section{Simulation results}\label{sec:6}
To illustrate the closed-loop behavior of Algorithm~1 under the parameter choices of Conditions~\ref{cond:1} and~\ref{cond:2}, we present two simulations, one for each condition and with a different initial state. The plant is assumed to evolve according to the continuous-time dynamics~\cref{eq:unicycleModelContinuousTime}, whereas the prediction model is taken as in~\cref{eq:unicycleModel} with \(h=1\).

In both experiments, we consider soft state constraints induced by two prohibited ellipsoidal regions in the \((\mathrm x,\mathrm y)\)-plane. These regions are incorporated into the stage cost via penalty terms. More precisely, we choose
\[
f_0(x,u)
:=
\left\|[
\begin{smallmatrix}
\mathrm{x}\\
\mathrm{y}
\end{smallmatrix}]
\right\|^2
+
\|u\|^2
+
\sum_{i=1}^2 \rho \bigl(\ell_i^2+\ell_i\bigr),
\]
where
$
\ell_i
:=
\max\!\left\{
-(p-p_i)^\top Q_i (p-p_i)+1,\;0
\right\},
\ i\in\{1,2\}.
$
For both experiments, the parameters $ m=10,\ J=20,\ k=5,\ \kappa=15,\ \alpha = 0.3,\ \rho = 10^5,\ N = 12,\ p_1=[\begin{smallmatrix}
    6\\7
\end{smallmatrix}], \ p_2=[\begin{smallmatrix}
    4\\0
\end{smallmatrix}],\ Q_1=[\begin{smallmatrix}
    0.1 & 0\\0&0.4
\end{smallmatrix}],\ Q_2=\tfrac{1}{7}[\begin{smallmatrix}
    3.5 & 0\\0&0.6
\end{smallmatrix}]$ are chosen to be the same.

The first simulation is taken according to Condition~\ref{cond:1} with $\sigma_i=10^{-3}, \ i\in [1:11]$ and the EGDCLF $V_1$ is taken such that $\phi(s)=12s^5-90s^4+260s^3-360s^2+240s-60,\ s\in[1,2]$. The initial state is $x_0^{(1)} = [15,\ 15,\ -\tfrac{\pi}{4},\ 0,\ 0]^\top$. The second simulation is taken according to Condition~\ref{cond:2}, where we denote the EGDCLF $V_2=\|x\|$. The initial state is $x_0^{(2)}=[10,\ 0, \ \tfrac{\pi}{2},\ -3,0]^\top$.
Different initial conditions are chosen, as both parametrizations behave similarly for large state norms, while their differences are primarily visible near the origin.
\Cref{fig:sim} illustrates the resulting closed-loop trajectories together with the prohibited regions. In both cases, we observe that the proposed flexible-step MPC scheme steers the system toward the origin while successfully avoiding the penalized regions. The use of the discretized prediction model seems not to have a significant effect on the closed-loop behavior. In general we have observed that parametrizations according to Condition~\ref{cond:1} lead to smoother behavior around the origin, whereas Condition~\ref{cond:2} typically results in more pronounced fluctuations.
\begin{figure}[h]
\centering\includegraphics[scale=0.85]{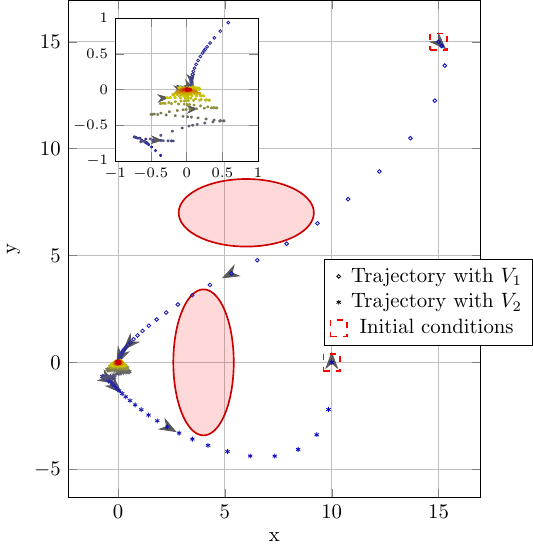}
\caption{Closed-loop trajectories of flexible-step MPC with the \emph{continuous-time} dynamic unicycle \cref{eq:unicycleModelContinuousTime} in
the presence of soft state constraints. The direction of each arrow indicates the orientation of the unicycle. In the simulation with EGDCLF $V_2$, the unicycle is moving ``backwards'' towards the origin.}
\label{fig:sim}
\end{figure}

\section{Conclusions and outlook}
We have investigated global exponential stabilization of the force- and torque-actuated unicycle model with flexible-step MPC. This nonholonomic system cannot be exponentially stabilized in the classical sense, as a high overshoot turns to be necessary for initial conditions in arbitrary small neighborhoods. Our algorithm guarantees the smallest possible overshoot. 
For future research, it might be promising to investigate a systematic approach to parametrize flexible-step MPC for other nonlinear systems. Finally, the incorporation of input and state constraints into the present framework appears to be a natural next step.
\bibliographystyle{IEEEtran}
\bibliography{bibFile}

%----------------------------------------------------------------------
%
%                               Appendix
%
%----------------------------------------------------------------------

\appendix
\section{Proof of \texorpdfstring{\Cref{prop:noGES}}{Proposition~\ref{prop:noGES}}}\label{sec:A}
Let $r$, $q$, $\lambda$, $\mu$ be positive real numbers with $1/2<q\leq1$. For every $z\in\mathbb{R}$, let $\lceil{z}\rceil$ denote the smallest integer $\geq{z}$. Because $1/2<q\leq1$, we can find some sufficiently small $\varepsilon\in(0,r)$ such that
\begin{equation}\label{eq:proofNoGES:1}
\underbrace{\left\lceil - \tfrac{1}{\mu}\,\log\big(\tfrac{1}{2\,\lambda}\,\varepsilon^{1-q}\big) \right\rceil}_{=:\tau}h\,\lambda^2\,\varepsilon^{2\,q-1} \ < \ \tfrac{1}{2}.
\end{equation}
For the sake of reaching a contradiction, suppose that there exists $u\colon\mathbb{N}\to\mathbb{R}^2$ such that the solution $x\colon\mathbb{N}\to\mathbb{R}^5$ of \cref{eq:unicycleModel} with input $u$ and initial condition $x(0)=[0,\varepsilon,0,0,0]^\top$ satisfies inequality \cref{eq:MichelBook3} for every $t\in\mathbb{N}$. Then, in particular,
\begin{equation}\label{eq:proofNoGES:2}
\|x(t)\| \ \leq \ \lambda\,\varepsilon^q
\end{equation}
for every $t\in\mathbb{N}$ and also
\begin{equation}\label{eq:proofNoGES:3}
\|x(\tau)\| \ \leq \ \|x(0)\|^q\,\lambda\,\mathrm{e}^{-\mu\,\tau} \ \leq \ \tfrac{\varepsilon}{2},
\end{equation}
where we have used \cref{eq:proofNoGES:1}. We conclude from \cref{eq:proofNoGES:2}, \cref{eq:proofNoGES:1} that
\begin{equation}\label{eq:proofNoGES:4}
\Big|\sum_{s=0}^{\tau-1}h\,v(s)\,\sin(\theta(s))\Big| \ \leq \ \tau\,h\,\lambda^2\,\varepsilon^{2q} \ < \ \tfrac{\varepsilon}{2}.
\end{equation}
Since $x$ is a solution of \cref{eq:unicycleModel}, it follows that
\begin{align}
\|x(t)\| & \ \geq \ |\mathrm{y}(t)| \ = \ \Big|\mathrm{y}(0) + \sum_{s=0}^{t-1}h\,v(s)\,\sin(\theta(s))\Big| \nonumber \\
& \ \geq \ \Big|\varepsilon - \Big|\sum_{s=0}^{t-1}h\,v(s)\,\sin(\theta(s))\Big|\Big| \ > \ \tfrac{\varepsilon}{2}, \label{eq:proofNoGES:5}
\end{align}
which contradicts \cref{eq:proofNoGES:3}.

\section{Proof of \texorpdfstring{\Cref{thm:expStab}}{Theorem~\ref{thm:expStab}}}\label{sec:B}
Assume that $V$ is an EGDCLF of order $N$ for \cref{eq:generalControlSystem} with $\alpha\in(0,1)$, $\sigma_1,\ldots,\sigma_N\colon\mathbb{X}\to(0,\infty)$, $\chi_1,\chi_2,\varphi\in\mathcal{K}_\infty$, and $\tilde{\lambda},\tilde{\mu}>0$ as in \Cref{def:EGDCLF}. Because of property \ref{def:EGDCLF:2} in \Cref{def:EGDCLF}, the optimal control problem \cref{eq:generalOCP} is feasible for every initial state $x_0\in\mathbb{X}$. For a proof of \Cref{thm:expStab}, it remains to be shown that system \cref{eq:generalControlSystem} controlled by \Cref{algo:fsMPC} is globally $\gamma$-exponentially stable, where $\gamma:=\chi_1^{-1}\circ\varphi\circ\chi_2\in\mathcal{K}_\infty$. To this end, set $\lambda:=\tilde{\lambda}\,\mathrm{e}^{\tilde{\mu}\alpha}$, and $\mu:=\alpha\,\tilde{\mu}/N$. Fix an arbitrary state--input trajectory $(x,u)\colon\mathbb{N}\to\mathbb{X}\times\mathbb{U}$ of system \cref{eq:generalControlSystem} controlled by \Cref{algo:fsMPC}. The proof is complete if we can show that inequality \cref{eq:MichelBook2} holds at every time $t\in\mathbb{N}$.

Let $0=\tau_0<\tau_1<\cdots$ denote all time instants at which step \ref{algo:fsMPC:1} in \Cref{algo:fsMPC} is executed. Because of step \ref{algo:fsMPC:2} in \Cref{algo:fsMPC} and property \ref{def:EGDCLF:3} in \Cref{def:EGDCLF}, we have
\begin{equation}\label{eq:proofExpStab:1}
(1-\alpha)\,V(x(\tau_k)) \ \leq \ \varphi(V(x(\tau_k)))\cdot\min_{i\in[1:N]}\sigma_i(x(\tau_k))
\end{equation}
for every $k\in\mathbb{N}$. Because of step \ref{algo:fsMPC:3} in \Cref{algo:fsMPC}, we have
\begin{equation}\label{eq:proofExpStab:2}
V(x(\tau_{k+1})) \ \leq \ (1-\alpha)\,V(x(\tau_k))
\end{equation}
for every $k\in\mathbb{N}$. Because of step \ref{algo:fsMPC:4} in \Cref{algo:fsMPC} and the average descent condition \ref{def:EGDCLF:2} in \Cref{def:EGDCLF}, we have
\begin{equation}\label{eq:proofExpStab:3}
V(x(t))\cdot\min_{i\in[1:N]}\sigma_i(x(\tau_k)) \ \leq \ (1-\alpha)\,V(x(\tau_k))
\end{equation}
for every $k\in\mathbb{N}$ and every $t\in(\tau_k\!:\!\tau_{k+1}]$.

Fix an arbitrary $k\in\mathbb{N}$ and an arbitrary $t\in(\tau_k\!:\!\tau_{k+1}]$. Because $\sigma_1,\ldots,\sigma_N$ are assumed to attain only positive values, it follows from \cref{eq:proofExpStab:1} and \cref{eq:proofExpStab:3} that
\begin{equation}\label{eq:proofExpStab:4}
V(x(t)) \ \leq \ \varphi(V(x(\tau_k))).
\end{equation}
Using $t\leq{(k+1)N}$ and the known inequality $1-\alpha\leq\mathrm{e}^{-\alpha}$, we conclude from \cref{eq:proofExpStab:2} and \cref{eq:proofExpStab:4} that
\begin{equation}\label{eq:proofExpStab:5}
V(x(t)) \ \leq \ \varphi\big(V(x_0)\,\mathrm{e}^{\alpha-\alpha\,t/N}\big).
\end{equation}
Because of property \ref{def:EGDCLF:1} in \Cref{def:EGDCLF}, it follows that
\begin{equation}\label{eq:proofExpStab:6}
\|x(t)\| \ \leq \ \big(\chi_1^{-1}\circ\varphi\big)\big(\chi_2(\|x_0\|)\,\mathrm{e}^{\alpha-\alpha\,t/N}\big)
\end{equation}
Now the asserted inequality \cref{eq:MichelBook2} follows from property \ref{def:EGDCLF:4} in \Cref{def:EGDCLF} and the suitable definitions of $\gamma$, $\lambda$, and $\mu$.

\section{Proofs of \texorpdfstring{\Cref{prop:cond:1,prop:cond:2}}{Propositions~\ref{prop:cond:1} and~\ref{prop:cond:2}}}\label{sec:C}
\subsection{Steering the unicycle to the origin}\label{sec:C.1}
Fix an arbitrary initial state%
\begin{equation}\label{Ala:01}
x_0 \ = \ \big[ \ \mathrm{x}_0, \ \mathrm{y}_0, \ \theta_0, \ v_0, \ \omega_0 \ \big]^\top \in \mathbb{R}^5.
\end{equation}%
We will construct a suitable state-input trajectory $(x,u)$, which describes the motion of the unicycle \cref{eq:unicycleModel} from $x_0$ to the origin within a prescribed prediction horizon $N\geq8$. As abbreviations, set%
\begin{equation}
\begin{split}
\mathrm{x}_1 & \ := \ \mathrm{x}_0 + h\,v_0\,\cos(\theta_0), \qquad \mathrm{\theta}_1 \ := \ \theta_0 + h\,\omega_0, \allowdisplaybreaks \label{Ala:02} \\
\mathrm{y}_1 & \ := \ \mathrm{y}_0 + h\,v_0\,\sin(\theta_0).
\end{split}
\end{equation}%
Choose any $\bar{\theta}\in[-\frac{\pi}{4},\frac{\pi}{4}]$ and $d\geq0$ such that%
\begin{equation}\label{Ala:04}
\bar{x} \ := \ d\,\cos(-\bar{\theta}), \qquad \bar{\mathrm{y}} \ := \ \tfrac{\mathrm{y}_1}{2} \ = \ \mathrm{y}_1 + d\,\sin(-\bar{\theta}).
\end{equation}%
Then $\bar{\mathrm{x}}^2+\bar{\mathrm{y}}^2=d^2$. Let $T$ be the largest integer less than or equal to $(N-1)/7$. Set $t_i:=1+(i-1)T$ for every $i\in[1\!:\!8]$. Define $x=[ \mathrm{x}, \mathrm{y}, \theta, v, \omega ]^\top\colon[0\!:\!N]\to\mathbb{R}^5$ by%
\begin{align}
x(t) & \ := \ x_0 & \text{for} \ t & = 0, \nonumber\allowdisplaybreaks \\
x(t) & \ := \ \left[ \begin{smallmatrix} \mathrm{x}_1 \\ \mathrm{y}_1 \\ \theta_1 + \frac{t-t_1}{T}(-\theta_1) \\ 0 \\ \frac{-\theta_1}{hT} \end{smallmatrix} \right] & \text{for} \ t & \in [\vphantom{]}t_1\!:\!t_2\vphantom{(}), \nonumber\allowdisplaybreaks \\
x(t) & \ := \ \left[ \begin{smallmatrix} \mathrm{x}_1 + \frac{t-t_2}{T}\,(-\mathrm{x}_1)\,\cos(0) \\ \mathrm{y}_1 + \frac{t-t_2}{T}\,(-\mathrm{x}_1)\,\sin(0) \\ 0 \\ \frac{-\mathrm{x}_1}{hT} \\ 0 \end{smallmatrix} \right] & \text{for} \ t & \in [\vphantom{]}t_2\!:\!t_3\vphantom{(}), \nonumber\allowdisplaybreaks \\
x(t) & \ := \ \left[ \begin{smallmatrix} 0 \\ \mathrm{y}_1 \\ 0 + \frac{t-t_3}{T}(-\bar{\theta}) \\ 0 \\ \frac{-\bar{\theta}}{hT} \end{smallmatrix} \right] & \text{for} \ t & \in [\vphantom{]}t_3\!:\!t_4\vphantom{(}), \nonumber\allowdisplaybreaks \\
x(t) & \ := \ \left[ \begin{smallmatrix} 0 + \frac{t-t_4}{T}\,d\,\cos(-\bar{\theta}) \\ \mathrm{y}_1 + \frac{t-t_4}{T}\,d\,\sin(-\bar{\theta}) \\ - \bar{\theta} \\ \frac{d}{hT} \\ 0 \end{smallmatrix} \right] & \text{for} \ t & \in [\vphantom{]}t_4\!:\!t_5\vphantom{(}), \nonumber\allowdisplaybreaks \\
x(t) & \ := \ \left[ \begin{smallmatrix} \bar{\mathrm{x}} \\ \bar{\mathrm{y}} \\ -\bar{\theta} + \frac{t-t_5}{T}\,2\,\bar{\theta} \\ 0 \\ \frac{2\,\bar{\theta}}{hT} \end{smallmatrix} \right] & \text{for} \ t & \in [\vphantom{]}t_5\!:\!t_6\vphantom{(}), \nonumber\allowdisplaybreaks \\
x(t) & \ := \ \left[ \begin{smallmatrix} \bar{\mathrm{x}} + \frac{t-t_6}{T}\,(-d)\,\cos(\bar{\theta}) \\ \bar{\mathrm{y}} + \frac{t-t_6}{T}\,(-d)\,\sin(\bar{\theta}) \\ \bar{\theta} \\ \frac{-d}{hT} \\ 0 \end{smallmatrix} \right] & \text{for} \ t & \in [\vphantom{]}t_6\!:\!t_7\vphantom{(}), \nonumber\allowdisplaybreaks \\
x(t) & \ := \ \left[ \begin{smallmatrix} 0 \\ 0 \\ \bar{\theta} + \frac{t-t_7}{T}\,(-\bar{\theta}) \\ 0 \\ \frac{-\bar{\theta}}{hT} \end{smallmatrix} \right] & \text{for} \ t & \in [\vphantom{]}t_7\!:\!t_8\vphantom{(}), \nonumber\allowdisplaybreaks \\
x(t) & \ := \ 0 & \text{for} \ t & \in [t_8\!:\!N]. \label{Ala:05}
\end{align}%
Define $u=[ F, \mathcal{T} ]^\top\colon[\vphantom{]}0\!:\!N\vphantom{(})\to\mathbb{R}^2$ by%
\begin{equation}\label{Ala:06}
\begin{bmatrix} F(t) \\ \mathcal{T}(t) \end{bmatrix} \ := \ \begin{bmatrix} k\,v(t) + \frac{m}{h}\,(v(t+1)-v(t)) \\ \kappa\,\omega(t) + \frac{J}{h}\,(\omega(t+1)-\omega(t)) \end{bmatrix}.
\end{equation}%
It is easy to see that $(u,x)$ is an input--state trajectory of the unicycle \cref{eq:unicycleModel}.%

\subsection{Proof of \texorpdfstring{\Cref{prop:cond:1}}{Proposition~\ref{prop:cond:1}}}\label{sec:C.2}
Choose $N\in\mathbb{N}$, $V\colon\mathbb{R}^5\to\mathbb{R}$, $\alpha\in(0,1)$, as well as $\sigma_1,\ldots,\sigma_N\colon\mathbb{R}^5\to(0,\infty)$ according to \Cref{cond:1}. Set%
\begin{equation}\label{Christian:01}
\xi_1 \ := \ \tfrac{1}{20}, \qquad \xi_2 \ := \ 2, \qquad \zeta \ := \ \tfrac{1}{\min\{\sigma_1,\ldots,\sigma_N\}},
\end{equation}%
and define $\chi_1,\chi_2,\varphi\in\mathcal{K}_\infty$ by \cref{prop:cond:1:eq}. Then, it is straightforward to check that \cref{eq:EGDCLF:1} and \cref{eq:EGDCLF:4} are satisfied for every $x\in\mathbb{R}^5$. Moreover, if we set $\tilde{\lambda}:=\tilde{\mu}:=1$, then \cref{eq:EGDCLF:5} holds for every $r\geq0$ and every $\tau\geq-\alpha$. The proof of \Cref{prop:cond:1} is complete if we can show that property \ref{def:EGDCLF:2} in \Cref{def:EGDCLF} holds.

Define%
\begin{equation}\label{Bahman:05}
\begin{split}
\bar{\theta} & \ := \ \mathrm{sgn}(\mathrm{y}_1)\left\{ \begin{tabular}{cl} $\arctan(|\mathrm{y}_1|^{1/2})$ & if $|\mathrm{y}_1|\leq1$ \\ $\frac{\pi}{4}$ & if $|\mathrm{y}_1|>1$ \end{tabular} \right., \allowdisplaybreaks \\
d & \ := \ \left\{ \begin{tabular}{cl} $\frac{1}{2}\,|\mathrm{y}_1|^{1/2}\,(1+|\mathrm{y}_1|)^{1/2}$ & if $|\mathrm{y}_1|\leq1$ \\ $\frac{1}{2^{1/2}}\,|\mathrm{y}_1|$ & if $|\mathrm{y}_1|>1$ \end{tabular} \right..
\end{split}
\end{equation}%
Let $x\colon[0\!:\!N]\to\mathbb{R}^5$ be defined as in \Cref{sec:C.1}. The proof of \Cref{prop:cond:1} is complete if we can show that the average descent condition \cref{eq:EGDCLF:2} is satisfied for $x$. From the definitions in \cref{cond:1:eq:1} and \cref{Ala:05}, we get%
\begin{align}
V(x(t))^4 & \leq \mathrm{x}_1^4 + |\mathrm{y}_1|^{\phi(|\mathrm{y}_1|)} + \theta_1^4 + \tfrac{\theta_1^4}{(hT)^4} & \text{for} \ t & \in [\vphantom{]}t_1\!:\!t_2\vphantom{(}), \nonumber \allowdisplaybreaks \\
V(x(t))^4 & \leq \mathrm{x}_1^4 + |\mathrm{y}_1|^{\phi(|\mathrm{y}_1|)} + \tfrac{\mathrm{x}_1^4}{(hT)^4} & \text{for} \ t & \in [\vphantom{]}t_2\!:\!t_3\vphantom{(}), \nonumber \allowdisplaybreaks \\
V(x(t))^4 & \leq |\mathrm{y}_1|^{\phi(|\mathrm{y}_1|)} + \bar{\theta}^4 + \tfrac{\bar{\theta}^4}{(hT)^4} & \text{for} \ t & \in [\vphantom{]}t_3\!:\!t_4\vphantom{(}), \nonumber \allowdisplaybreaks \\
V(x(t))^4 & \leq \bar{\mathrm{x}}^4 + |\mathrm{y}_1|^{\phi(|\mathrm{y}_1|)} + \bar{\theta}^4 + \tfrac{d^4}{(hT)^4} & \text{for} \ t & \in [\vphantom{]}t_4\!:\!t_5\vphantom{(}), \nonumber \allowdisplaybreaks \\
V(x(t))^4 & \leq \bar{\mathrm{x}}^4 + |\bar{\mathrm{y}}|^{\phi(|\bar{\mathrm{y}}|)} + \bar{\theta}^4 + \tfrac{(2\bar{\theta})^4}{(hT)^4} & \text{for} \ t & \in [\vphantom{]}t_5\!:\!t_6\vphantom{(}), \nonumber \allowdisplaybreaks \\
V(x(t))^4 & \leq \bar{\mathrm{x}}^4 + |\bar{\mathrm{y}}|^{\phi(|\bar{\mathrm{y}}|)} + \bar{\theta}^4 + \tfrac{d^4}{(hT)^4} & \text{for} \ t & \in [\vphantom{]}t_6\!:\!t_7\vphantom{(}), \nonumber \allowdisplaybreaks \\
V(x(t))^4 & \leq \bar{\theta}^4 + \tfrac{\bar{\theta}^4}{(hT)^4} \qquad\qquad \text{for} \ t \in [\vphantom{]}t_7\!:\!t_8\vphantom{(}).\!\!\!\!\!\!\!\!\!\!\!\!\!\!\!\!\!\!\!\!\!\!\!\!\!\!\!\! & & \label{Christian:02}
\end{align}%
In the next step, we derive estimates for the terms on the right-hand side of \cref{Christian:02}. It follows from equation \cref{Ala:02} and the assumptions on the function $\phi$ in \Cref{cond:1} that%
\begin{equation}\label{Christian:03}
\begin{split}
& \mathrm{x}_1^4 \ \leq \ 8\,\mathrm{x}_0^4 + 8\,h^4\,v_0^4, \qquad \theta_1^4 \ \leq \ 8\,\theta_0^4 + 8\,h^4\,\omega_0^4, \\
& |\mathrm{y}_1|^{\phi(|\mathrm{y}_1|)} \ \leq \ 32\,|\mathrm{y}_0|^{\phi(|\mathrm{y}_0|)} + 8\,h^4\,v_0^4 + \theta_0^4.
\end{split}
\end{equation}%
Using the definitions in \cref{Ala:04}, \cref{Bahman:05}, and the assumptions on $\phi$, we also obtain%
\begin{equation}\label{Christian:05}
\max\{\bar{x}^4, \bar{\theta}^4, |\bar{\mathrm{y}}|^{\phi(|\bar{\mathrm{y}}|)}\} \ \leq \ |\mathrm{y}_1|^{\phi(|\mathrm{y}_1|)}. 
\end{equation}%
By applying the estimates in \cref{Christian:03}, \cref{Christian:05} to the terms on the right-hand side of \cref{Christian:02} and by using the definition of $V$ in \cref{cond:1:eq:1}, it follows that
\begin{equation}
V(x(t))^4 \ \leq \ c^4\,V(x_0)^4
\end{equation}%
for every $t\in[\vphantom{]}t_1\!:\!t_8\vphantom{(})$, where the constant $c$ is defined according to \Cref{cond:1}. Because%
\begin{align}
\sum_{t=1}^N\sigma_t(x_0)\,V(x(t)) & \ \leq \ \sum_{t=1}^{t_8-1}\sigma_t(x_0)\,c\,V(x_0) \nonumber \\
& \ \leq \ (1-\alpha)\,V(x_0),
\end{align}%
the average descent condition \cref{eq:EGDCLF:2} is satisfied.

\subsection{Proof of \texorpdfstring{\Cref{prop:cond:2}}{Proposition~\ref{prop:cond:2}}}\label{sec:C.3}
Choose $N\in\mathbb{N}$, $V\colon\mathbb{R}^5\to\mathbb{R}$, $\alpha\in(0,1)$, as well as $\sigma_1,\ldots,\sigma_N\colon\mathbb{R}^5\to(0,\infty)$ according to \Cref{cond:2}. Set%
\begin{equation}\label{Bahman:01}
\xi_1 \ := \ 1, \qquad \xi_2 \ := \ 1, \qquad \zeta \ := \ (N-1)\,c,
\end{equation}%
and define $\chi_1,\chi_2,\varphi\in\mathcal{K}_\infty$ by \cref{prop:cond:2:eq}. Then, it is straightforward to check that \cref{eq:EGDCLF:1} and \cref{eq:EGDCLF:4} are satisfied for every $x\in\mathbb{R}^5$. Moreover, if we set $\tilde{\lambda}:=e^\frac{\alpha}{2}$ and $\tilde{\mu}:=\tfrac{1}{2}$, then \cref{eq:EGDCLF:5} holds for every $r\geq0$ and every $\tau\geq-\alpha$. The proof of \Cref{prop:cond:2} is complete if we can show that property \ref{def:EGDCLF:2} in \Cref{def:EGDCLF} holds.

Let $x\colon[0\!:\!N]\to\mathbb{R}^5$ be defined as in \Cref{sec:C.1}. The proof of \Cref{prop:cond:2} is complete if we can show that the average descent condition \cref{eq:EGDCLF:2} is satisfied for $x$. From the definitions in \cref{cond:2:eq:1} and \cref{Ala:05}, we get%
\begin{align}
V(x(t))^2 & \leq \mathrm{x}_1^2 + \mathrm{y}_1^2 + \theta_1^2 + \tfrac{\theta_1^2}{(hT)^2} & \text{for} \ t & \in [\vphantom{]}t_1\!:\!t_2\vphantom{(}), \nonumber \allowdisplaybreaks \\
V(x(t))^2 & \leq \mathrm{x}_1^2 + \mathrm{y}_1^2 + \tfrac{\mathrm{x}_1^2}{(hT)^2} & \text{for} \ t & \in [\vphantom{]}t_2\!:\!t_3\vphantom{(}), \nonumber \allowdisplaybreaks \\
V(x(t))^2 & \leq \mathrm{y}_1^2 + \bar{\theta}^2 + \tfrac{\bar{\theta}^2}{(hT)^2} & \text{for} \ t & \in [\vphantom{]}t_3\!:\!t_4\vphantom{(}), \nonumber \allowdisplaybreaks \\
V(x(t))^2 & \leq \bar{\mathrm{x}}^2 + \mathrm{y}_1^2 + \bar{\theta}^2 + \tfrac{d^2}{(hT)^2} & \text{for} \ t & \in [\vphantom{]}t_4\!:\!t_5\vphantom{(}), \nonumber \allowdisplaybreaks \\
V(x(t))^2 & \leq \bar{\mathrm{x}}^2 + \bar{\mathrm{y}}^2 + \bar{\theta}^2 + \tfrac{(2\bar{\theta})^2}{(hT)^2} & \text{for} \ t & \in [\vphantom{]}t_5\!:\!t_6\vphantom{(}), \nonumber \allowdisplaybreaks \\
V(x(t))^2 & \leq \bar{\mathrm{x}}^2 + \bar{\mathrm{y}}^2 + \bar{\theta}^2 + \tfrac{d^2}{(hT)^2} & \text{for} \ t & \in [\vphantom{]}t_6\!:\!t_7\vphantom{(}), \nonumber \allowdisplaybreaks \\
V(x(t))^2 & \leq \bar{\theta}^2 + \tfrac{\bar{\theta}^2}{(hT)^2} \qquad\qquad \text{for} \ t \in [\vphantom{]}t_7\!:\!t_8\vphantom{(}).\!\!\!\!\!\!\!\!\!\!\!\!\!\!\!\!\!\!\!\!\!\!\!\!\!\!\!\! & & \label{Bahman:02}
\end{align}%
In the next step, we derive estimates for the terms on the right-hand side of \cref{Bahman:02}. It follows from \cref{Ala:02} that%
\begin{equation}\label{Bahman:03}
\begin{split}
\mathrm{x}_1^2 & \ \leq \ 2\,\mathrm{x}_0^2 + 2\,h^2\,v_0^2, \qquad \theta_1^2 \ \leq \ 2\,\theta_0^2 + 2\,h^2\,\omega_0^2, \\
\mathrm{y}_1^2 & \ \leq \ 2\,\mathrm{y}_0^2 + 2\,h^2\,v_0^2.
\end{split}
\end{equation}
Now we make a case analysis. First, we consider the case $\|x_0\|\leq1$. In this case, we define $\bar{\theta}$ and $d$ as in \cref{Bahman:05}. For this choice, we get estimates%
\begin{equation}\label{Bahman:07}
\begin{split}
\bar{\theta}^2 & \ \leq \ |\mathrm{y}_1|, \qquad d^2 \leq \tfrac{1}{2}\,\max\{|\mathrm{y}_1|,|\mathrm{y}_1|^2\}, \\
\bar{\mathrm{x}}^2 & \ \leq \ \tfrac{1}{4}\,\max\{|\mathrm{y}_1|,|\mathrm{y}_1|^2\}.
\end{split}
\end{equation}%
Second, we consider the case $\|x_0\|>1$. In this case, we define $\bar{\theta}$ and $d$ by%
\begin{equation}
\bar{\theta} \ := \ \mathrm{sgn}(\mathrm{y}_1)\,\tfrac{\pi}{4}, \qquad d \ := \ \tfrac{1}{2^{1/2}}\,|\mathrm{y}_1|, \label{Bahman:09}
\end{equation}%
and we obtain that%
\begin{equation}
\bar{\theta}^2 \ \leq \ \|x_0\|^2, \qquad d^2 \ = \ 2\,\bar{x}^2 \ \leq \ \tfrac{1}{2}\,|\mathrm{y}_1|^2. \label{Bahman:10}
\end{equation}%
By applying the estimates in \cref{Bahman:03}, \cref{Bahman:07}, \cref{Bahman:10} to the terms on the right-hand side of \cref{Bahman:02}, it follows that%
\begin{equation}
V(x(t))^2 \ \leq \ c^2\,\max\{\|x_0\|,\|x_0\|^2\}
\end{equation}%
for every $t\in[\vphantom{]}t_1\!:\!t_8\vphantom{(})$, where the constant $c$ is defined according to \Cref{cond:2}. Because%
\begin{align}
& \sum_{t=1}^N\sigma_t(x_0)\,V(x(t)) \nonumber\allowdisplaybreaks \\
& \ \leq \ \sum_{t=1}^{t_8-1}\sigma_t(x_0)\,c\,\max\big\{\|x_0\|^{\frac{1}{2}},\|x_0\|\big\} \nonumber\allowdisplaybreaks \\
& \ \leq \ (1-\alpha)\,\|x_0\| \ = \ (1-\alpha)\,V(x_0),
\end{align}%
the average descent condition \cref{eq:EGDCLF:2} is satisfied.
\end{document}